\newtheorem{remark}{Remark}[section]
\newtheorem{thm}{Theorem}[section]
\newtheorem{cor}{ Corollary}[section]
\newtheorem{prop}{Proposition}[section]
\newtheorem{lem}{Lemma}[section]
\newcommand\dd{\mathrm{d}}
\newcommand\pp{\partial}
\newcommand\x{\bm{x}}
\newcommand\uvec{\mathbf{u}}
	\newcommand\be {\begin{equation}}
	\newcommand\ee {\end{equation}}
	\newcommand\bu {{\bf u}}
	\newcommand\dt {{\Delta t}}
\begin{document}

\title{A second order accurate, operator splitting scheme for reaction-diffusion systems in an energetic variational formulation}
% \title{An Energy-based Operator Splitting Methods for General Reaction-Diffusion Equation}
% \author{Chun Liu, Cheng Wang, Yiwei Wang}

\author{Chun Liu\thanks{Department of Applied Mathematics, Illinois Institute of Technology, Chicago, IL 60616, USA (Corresponding author: \email{cliu124@iit.edu})} \and Cheng Wang\thanks{Department of Mathematics, University of Massachusetts, North Dartmouth, MA 02747 (\email{cwang1@umassd.edu})}\and Yiwei Wang\thanks{Department of Applied Mathematics, Illinois Institute of Technology, Chicago, IL 60616, USA (\email{ywang487@iit.edu})} }
%\and Steven M. Wise\thanks{Department of Mathematics, The University of Tennessee, Knoxville, TN 37996, USA (\email{swise1@utk.edu})}}

\iffalse
\author[C. Liu]{Chun Liu}
\address[Chun Liu]
  {\newline Department of Applied Mathematics, Illinois Institute of Technology, Chicago, IL 60616, USA}
\email{cliu124@iit.edu}

\author[C. Wang]{Cheng Wang}
\address[Cheng Wang]
  {Department of Mathematics, University of Massachusetts, North Dartmouth, MA 02747, USA} %(\email{cwang1@umassd.edu})}
\email{cwang1@umassd.edu}

\author[Y. Wang]{Yiwei Wang}
\address[Yiwei Wang]
  {\newline Department of Applied Mathematics, Illinois Institute of Technology, Chicago, IL 60616, USA (Corresponding author)}
\email{ywang487@iit.edu}
\fi

%\address[iit]{Department of Applied Mathematics, Illinois Institute of Technology, Chicago, IL 60616, USA}

%\date{}

\maketitle

\begin{abstract}
A second-order accurate in time, positivity-preserving, and unconditionally energy stable operator splitting numerical scheme is proposed and analyzed for the system of reaction-diffusion equations with detailed balance.  The scheme is designed based on an energetic variational formulation, in which the reaction part is reformulated in terms of the reaction trajectory, and both the reaction and diffusion parts dissipate the same free energy. %These features greatly facilitate the numerical design for the second order scheme. 
At the reaction stage, the reaction trajectory equation is approximated by a second-order Crank-Nicolson type method. The unique solvability, positivity-preserving and energy-stability are established based on a convexity analysis. In the diffusion stage, an exact integrator is applied if the diffusion coefficients are constant, and a Crank-Nicolson type scheme is applied if the diffusion process becomes nonlinear.
% In the case of nonlinear diffusion coefficients, we c
% while a modified Crank-Nicolson approximation is applied to the chemical potential and 
% , and the numerical algorithm takes a form of an optimal transport in terms of the physical energy. 
In either case, both the positivity-preserving property and energy stability could be theoretical established. % For the diffusion stage, we apply the ETD scheme for linear diffusion and a  second-order scheme % by treating all the logarithmic terms in the reformulated form implicitly due to their convex nature. The positivity-preserving property and unique solvability can be theoretically proved, based on the singular behavior of the logarithmic function around the limiting value. Moreover, the energy stability of this scheme at the reaction stage can be proved by a careful convexity analysis. Similar techniques are used to establish the positivity-preserving property and energy stability for the standard semi-implicit solver at the diffusion stage. 
Moreover, a combination of the numerical algorithms at both stages by the Strang splitting approach leads to a second-order accurate, structure preserving scheme for the original reaction-diffusion system. Numerical experiments are presented, which demonstrate the accuracy of the proposed scheme.
\end{abstract}

\section{Introduction}

% This paper is concerned with numerical schemes for reaction-diffusion systems that take the form of
In this work, we consider the following type of reaction diffusion systems 
\begin{equation}\label{RD_1}
  \pp_t c_i = \nabla \cdot (D_i(c_i, \x) \nabla c_i) + r_i ({\bm c}), \quad i = 1, \ldots N,
 \end{equation}
where $c_i > 0$ is the concentration of $i$-th species, $D_i(c_i, \x)$ are diffusion coefficients, and $r_i ({\bm c})$ are nonlinear reaction terms for the chemical reaction
\begin{equation}
  \ce{ $\alpha_{1}^{l} X_1 + \alpha_{2}^{l}X_2 + \ldots \alpha_{N}^{l} X_N$ <=> $\beta_{1}^{l} X_1 + \beta_{2}^{l}X_2 + \ldots \beta_{N}^{l} X_N$}, \quad l = 1, \ldots, M.
  \end{equation}
%for $i$-th species that will be specified later. 
Such a type of reaction-diffusion systems can be found in many mathematical models in chemical engineering, biology, soft matter physics and combustion theory, see \cite{chipot2003variational, hao2020spatial, hawkins2012numerical, julicher1997modeling, kondo2010reaction, liu2018accurate, pearson1993complex, perthame2014hele, prost2015active, wang2003robust,  wang2021two} for examples.
%has a wide range of applications in modeling many processes in  such as pattern formation, molecular motors and active matters.
% The reaction-diffusion equation (\ref{RD_1})
%only depend on $c_i$  %, so that the diffusion part is fly decoupled

%, the stiffness in the reaction part often brings challenging issues in the numerical simulations.
Numerical simulation for the reaction-diffusion system (\ref{RD_1}) turns out to be very challenging, due to the stiffness brought by the reaction term. Moreover, a naive discretization to (\ref{RD_1}) may fail to preserve the positivity and the conservation property in the original system \cite{formaggia2011positivity}. 
% many numerical schemes have been developed to solve reaction kinetics and reaction-diffusion systems \cite{bertolazzi1996positive, formaggia2011positivity, huang2019positivity, zhao2011operator}, a theoretical justification of these properties has been very limited.
% Operator splitting methods have been widely used to study 
 % is an old idea, and has been applied to many nonlinear PDEs \cite{Bao2002, Thalhammer2012}, including reaction-diffusion systems \cite{descombes2001convergence, zhao2011operator} successfully.
 To overcome these difficulties, many numerical methods have been developed to solve reaction kinetics and reaction-diffusion systems \cite{bertolazzi1996positive, formaggia2011positivity, huang2019positivity, zhao2011operator}, including some operator splitting approaches~\cite{carrillo2018splitting, descombes2001convergence, gallouet2019unbalanced, gallouet2017jko, zhao2011operator}.
 
 %Recently, 
% However, due to lack of knowledge for the variational structure of the systems~\eqref{RD_1}, most of existing works of operator splitting have applied a direct numerical discretization to the PDE system, so that the variational structure has been broken. In turn, it might be difficult to theoretically justify the energy stability, as well as positive preserving property of these existing numerical works. 
It has been discovered that for certain form of reaction-diffusion systems, in which the reaction part describes the reversible chemical reaction satisfying the law of mass action with detailed balance condition, the whole system admits an energy-dissipation law, which opens a door of developing structure-preserving numerical schemes.
%if the reaction terms $r_i({\bm c})$  law of mass action  the detailed balance condition, the reaction-diffusion system (\ref{RD_1}) 
In more details, under certain conditions, which will be specified in the next section, the reaction-diffusion system (\ref{RD_1}) can be reformulated as a combination of two generalized gradient flows (with different patterns) for a single free energy \cite{liero2013gradient, wang2020field}.
%preserves a variational structure \cite{wang2020field}, and this subtle fact has opened a new door to develop structure-preserving schemes for such type of systems. More precisely, one can reformulate the reaction part of (\ref{RD_1}) as a generalized gradient flow in terms of a reaction trajectory ${\bm R}(t)$, which accounts for number of reaction has happened in the forward direction. 
Since the reaction and diffusion parts of the original system dissipate the same free energy, it is natural to use an operator splitting approach to develop an energy stable scheme for the whole system. 
% or have different physical energy in different computational stages so that a combined energy stability estimate becomes very challenging at a theoretical level. 
%In particular, 
Based on this variational structure, a first order accurate operator splitting scheme has been constructed in a recent work~\cite{liu2020structure}, with the variational structure theoretically preserved for the numerical solution. In this approach, since the physical free energy is in the form of logarithmic functions of the concentration ${\bm c}$, a linear function of reaction trajectories ${\bm R}$, the positivity-preserving analysis of the numerical scheme at both stages has been established. Similar to the analysis in a recent article~\cite{chen19b} for the Flory-Huggins Cahn-Hilliard flow, an implicit treatment of the nonlinear singular logarithmic term is crucial to theoretically justify its positivity-preserving property. A more careful analysis reveals that, the convex and the singular natures of the implicit nonlinear parts prevent the numerical solutions approach the singular limiting values, so that the positivity-preserving property is available for the density variables of all the species. A detailed convergence analysis and error estimate have also been reported in a recent work~\cite{LiuC2021c}. However, %it is not easy to extend this approach to a
%a direct extension to a second order accurate splitting is not a trivial, and the highly nonlinear and singular nature of the logarithmic energy potential makes such an extension even more challenging. 
it is a not trivial task to develop a second order accurate operator splitting scheme based on this idea. In fact, most existing works of second order energy stable scheme for gradient flows are multi-step algorithms, based on either modified Crank-Nicolson or BDF2 temporal discretization, and a multi-step approximation to the concave terms is usually needed to ensure both the unique solvability and energy stability. On the other hand, a single step, second order approximation has to be accomplished at each stage in the operator splitting approach, while a theoretical justification of positivity-preserving and energy stability turns out to be very challenging. 

In this article, we propose and analyze a second order accurate operator splitting scheme for the reaction-diffusion system with the detailed balance condition. Following the energetic variational formulation, the splitting scheme solves the reaction trajectory equation of ${\bm R}$ at the reaction stage, and solves the diffusion equation for ${\bm c}$ in the diffusion stage.  To overcome the above-mentioned difficulties, we make use of a numerical profile created by the first order convex splitting algorithm, which is proved to be a second order accurate approximation to the physical quantity at time step $t^{n+1}$, to construct a second order approximation to the mobility part. %In fact, such a numerical profile , locally in time, and 
Then an application of modified Crank-Nicolson formula leads to a second order approximation to the mobility function at the intermediate time instant $t^{n+1/2}$. Meanwhile, the physical energy does not contain any concave part in the reaction-diffusion system, so that a single step, modified Crank-Nicolson method leads to a second order accurate algorithm. In addition, an artificial second order Douglas-Dupont-type regularization term \cite{chen19b}, in the form of $\dt \sum_{i=1}^N \sigma_i ( \mu_i (R^{n+1} - R^n) ))$, is added in the chemical potential, to ensure the positivity-preserving property. %  The positivity-preserving property is ensured by the singular nature of these second order artificial regularization terms, as $R^{n+1} \to 0$. 
The energy stability is derived by a careful energy estimate, because of the choice in the modified Crank-Nicolson approximation. These techniques lead to a second order accurate, positivity preserving and energy stable algorithm in the reaction stage. 

  In the diffusion stage, an exact integrator, so called exponential time differencing (ETD) method is applied if the diffusion coefficients are constant. Such an ETD method solves the diffusion stage equation exactly (by keeping the finite difference spatial discretization), so that both the positivity-preserving and energy stability are ensured. If the diffusion coefficients are nonlinear, we have to apply a similar idea as in the reaction stage: a predictor-corrector approach in the mobility approximation and a modified Crank-Nicolson algorithm for the chemical potential. %In fact, the numerical method takes a form of an optimal transport in terms of the physical energy in the case of nonlinear diffusion coefficients. 
  In either case, both the positivity-preserving and energy stability could be theoretically justified for the numerical solution in the diffusion stage. Finally, a combination of the numerical algorithms at both stages by the Strang splitting approach leads to a second-order accurate, structure preserving scheme for the original reaction-diffusion system.

The rest of this article is organized as follows. The energetic variational approach is reviewed in Section 2, for the reaction-diffusion systems with the detailed balance condition. Subsequently, the second-order operator splitting scheme is presented in Section 3. The positivity-preserving and energy stability analyses will be provided at each stage as well. Some numerical results will be presented in Section 4, to demonstrate the performance of the second order operator splitting scheme.

% Operator splitting ....

\section{Review of the energetic variational approach for reaction-diffusion systems} % with detailed balance}
In this section, we briefly review the energetic variational approach for reaction-diffusion systems with detailed balance, which will be the foundation of the second order operator splitting scheme developed in the next section. We refer interested readers to \cite{liu2020structure, wang2020field} for more detailed descriptions.

The energetic variational approach (EnVarA) \cite{eisenberg2010energy, giga2017variational, liu2009introduction}, which is inspired by the seminal works of Rayleigh \cite{rayleigh1873note} and Onsager \cite{onsager1931reciprocal,onsager1931reciprocal2}, provides a systematic way to derive the dynamics of the system from a prescribed energy-dissipation law. In more details, an energy-dissipation law, which comes from the first and second law of thermodynamics, can be written as 
$$ 
  \frac{\dd}{\dd t} E^{\rm total} = - \triangle , 
$$
for an isothermal closed system, where $E^{\rm total}$ is the total energy, including both the kinetic energy $\mathcal{K}$ and the Helmholtz free energy $\mathcal{F}$, and $\triangle \geq 0$ is the energy dissipation rate which is equal to the entropy production in the process. The energy-dissipation law, along with the kinematics of employed variables, describe all the physics and the assumptions in the system. Starting with an energy-dissipation law, the EnVarA derives the dynamics of the systems through two variational principles, the Least Action Principle (LAP) and the Maximum Dissipation Principle (MDP). The LAP, which states the equation of motion for a Hamiltonian system can be derived from the variation of the action functional $\mathcal{A} = \int_{0}^T \mathcal{K} - \mathcal{F} \dd t$, with respect to the flow maps, gives a unique procedure to derive the conservative force for the system. % The LAP is indeed an manifestation of the rule $\delta E = {\rm force} \cdot \delta x$. 
In the MDP, variation of the dissipation potential $\mathcal{D}$, which equals to $\frac{1}{2}\triangle$ in the linear response regime, with respect to the rate (such as velocity), gives the dissipation force for the system. In turn, the force balance condition leads to the evolution equation to the system
\begin{equation*}
\frac{\delta \mathcal{D}}{\delta \x_t} = \frac{\delta \mathcal{A}}{\delta \x}.
\end{equation*}
In this formulation, the energy-dissipation law, along with the kinematics of state variables,describes all the physics and the assumptions for a given system.
The energetic variational approach has been successfully applied to build up many mathematical models \cite{giga2017variational}, including systems with chemical reactions \cite{wang2020field, wang2021two}; it has also provided a guideline of designing structure-preserving numerical schemes for systems with variational structures \cite{liu2020structure, liu2019lagrangian, liu2020variational}, etc.

\subsection{Reaction kinetics}

Consider a system with $N$ species $\{ X_1, X_2, \ldots X_N \}$ and $M$ reversible chemical reactions given by% in a fixed volumn $V$, is given by \cite{ge2016mesoscopic}
\begin{equation}
\ce{ $\alpha_{1}^{l} X_1 + \alpha_{2}^{l}X_2 + \ldots \alpha_{N}^{l} X_N$ <=> $\beta_{1}^{l} X_1 + \beta_{2}^{l}X_2 + \ldots \beta_{N}^{l} X_N$}, \quad l = 1, \ldots, M.
\end{equation}
Denote ${\bm c} = (c_1, c_2, \ldots, c_N)^{\rm T}$, the concentrations of all species.  %. Due to the conservation of mass, 
The variable vector ${\bm c}$ satisfies the reaction kinetics
\begin{equation}\label{rk1}
\pp_t c_i  = \sum_{l=1}^M \sigma_{il} r_l({\bm c}),
\end{equation}
% where $\uvec_i$ is the average velocity of the diffusion process of $i-$the species, 
where $r_l({\bm c})$ is the reaction rate for $l-$the chemical reaction, and $\sigma_{il} = \beta^l_i - \alpha^l_i$ is the stoichiometric coefficients. From (\ref{rk1}), it is noticed that
\begin{equation}\label{conserv}
\frac{\dd}{\dd t} (\bm{e} \cdot c) = \bm{e} \cdot \bm{\sigma} \bm{r}(\bm{c}(t), t) = 0, \quad 
 \mbox{for}  \, \, \,  \bm{e} \in Ker(\bm{\sigma}^{\rm T}) .  
\end{equation}
In turn, one can define  $N - rank ({\bm \sigma})$ linearly independent conserved quantities for the reaction network. In the classical chemical kinetics, $r_l({\bm c})$ is determined by the law of mass action (LMA), which states that
% it is often assumed that 
the reaction rate is directly proportional to the product of the reactant concentrations, i.e., % In more details, the formula is given by 
\begin{equation}\label{LMA}
r_l (\bm{c}) = k_{l}^+ {\bm c}^{{\bm \alpha}^l} -  k_{l}^- {\bm c}^{{\bm \beta}^l}, \quad 
   {\bm c}^{{\bm \alpha}^l}  = \prod_{i=1}^N c_i^{\alpha_i^l}, \quad {\bm c}^{{\bm \beta}^l}  = \prod_{i=1}^N c_i^{\beta_i^l} , 
\end{equation}
in which $k_{l}^+$ and $k_{l}^-$ are the forward and backward reaction constants for the $l$-th reaction.

%Simple discretization of the reaction kinetics may fail to preserve the conservation properties (\ref{conserv}), as well as the positivity of ${\bm c}$ \cite{formaggia2011positivity, sandu2001positive}.

% From a thermodynamical viewpoint, the Lyapunov function can by reformulated as 
The free energy of the system can be written as \cite{mielke2017non,wang2020field} 
\begin{equation}\label{free_energy_U}
\mathcal{F}[c_i] =  \int \sum_{i=1}^N \left( c_i (\ln c_i - 1) + c_i U_i \right) \dd \x,
\end{equation}
where the first part stands for the entropy, and $U_i$ is the internal energy associated with each species. In general, $U_i$ depends on ${\bm c}$ and $\x$, and the choice of $U_i$ determines the equilibrium of the system. We assume that $U_i$ is a constant throughout this paper.
% In general, (\ref{U_C_eq}) may not admi
% For simplicity, one can take $U_i = - \ln c_i^{\infty}$.
%The system (\ref{CR_ODE}) admits  $N - rank ({\bm \sigma})$ linearly independent conserved quantities (invariants), due to the fact that 
Moreover, it has been shown that the reaction kinetics (\ref{rk1}) along with the law of mass action (\ref{LMA}) admits a Lyapunov function if there exists a strictly positive equilibrium point $\bm{c}_{\infty} \in \mathbb{R}^N_{+}$, satisfying  
\begin{equation}
  % k_{i+} c_{i, \infty}^{\alpha_i} = k_{i-} c_{\infty}^{\beta_i}, \quad i  = 1, \ldots M.
  k_{l+} {\bm c}_{\infty}^{{\bm \alpha}^l} = k_{l-} {\bm c}_{\infty}^{{\bm \beta}^l} ,  
  \quad l  = 1, \ldots M.
\end{equation}
The condition is known as the \emph{detailed balance} condition. Within $\bm{c}_{\infty}$, one can define the Lyapunov function as
\begin{equation}\label{free_energy_c}
\mathcal{F}[c_i] = \sum_{i=1}^N c_i \left( \ln \left( \frac{c_i}{c_i^{\infty}}  \right) - 1 \right) . 
\end{equation}
It can be noticed that $c_i^{\infty}$ and $U_i$ are related through
\begin{equation}
 \sum_{i=1}^N \alpha_i^l (\ln c_i^{\infty} + U_i) = \sum_{i=1}^N \beta_i^l  (\ln c_i^{\infty} + U_i), \quad l = 1, \ldots, M.
\end{equation}

% where ${\bm c}^{\infty} \in \mathbb{R}^N_{+}$ is a detailed balance equilibrium.

%In general, a chemical reaction network may not admit an equilibriu 
% discovered for a long time that there exists a Lyapunov function for the reaction kinetics (\ref{CR_ODE}) with law of mass action (\ref{LMA}), if the system satisfies the \emph{detailed balance} condition \cite{desvillettes2017trend, mielke2011gradient, shear1967analog}. The detailed balance condition can be defined as follows \cite{mielke2011gradient}:
\iffalse
\begin{definition}
  An equilibrium point $\bm{c}_{\infty} \in \mathbb{R}^N_{+}$ is said to be a
  \emph{detailed balance} equilibrium if and only if any forward reaction is balanced with its corresponding backward reaction at the equilibrium, i.e.,
  \begin{equation}
    % k_{i+} c_{i, \infty}^{\alpha_i} = k_{i-} c_{\infty}^{\beta_i}, \quad i  = 1, \ldots M.
    k_{l+} {\bm c}_{\infty}^{{\bm \alpha}^l} = k_{l-} {\bm c}_{\infty}^{{\bm \beta}^l} ,  
    \quad l  = 1, \ldots M . 
  \end{equation}
% A chemical reaction network is called detailed balance if it possesses a detailed balance equilibrium for each strictly positive.
\end{definition}
% Variational structures for detailed balanced reaction kinetic
\fi

To transform the reaction kinetics into a variational frame, it is important to introduce another state variable ${\bm R} \in \mathbb{R}^M$, known as the reaction trajectory \cite{oster1974chemical, wang2020field}, or the extent of reaction \cite{de1927affinite, kondepudi2014modern}. The $l$-th component of ${\bm R}(t)$ corresponds to the number of $l$-th reaction that has happened by time $t$ in the forward direction. For any initial condition ${\bm c}(0) \in \mathbb{R}^N_{+}$, the value of ${\bm c}(t)$ can be represented in terms of ${\bm R}$ as the following formula 
\begin{equation}\label{c_R_1}
      \bm{c}(t) = \bm{c}(0) + \bm{\sigma} {\bm R}(t),  \quad 
      \mbox{${\bm \sigma} \in \mathbb{R}^{N \times M}$ is the stoichiometric matrix} .       
\end{equation}
% where ${\bm R}(t) \in \mathbb{R}^M$ represents $M$ reaction trajectories in the system. % for this chemical reaction, 
% The reaction trajectory ${\bm R}$ is an analogy to flow maps in mechanical systems.
This equation can be viewed as the kinematics of a reaction kinetics, which embodies the conservation properties (\ref{conserv}).
In particular, the positivity of ${\bm c}$ requires a constraint on ${\bm R}$: 
$$ 
  \bm{\sigma} {\bm R}(t) + \bm{c}(0) > 0 . 
$$
Subsequently, the reaction rate ${\bm r}$ can be defined as $\dot{\bm R}$, known as the reaction velocity \cite{kondepudi2014modern}. In the framework of the EnVarA, we can describe the reaction kinetics through the energy-dissipation law in terms of ${\bm R}(t)$ and $\dot{\bm R}$: 
\begin{equation}\label{ED_R}
\frac{\dd}{\dd t} \mathcal{F}[ {\bm c} ({\bm R})] = - \mathcal{D}_{\rm chem}[{\bm R},  \dot{\bm R}],
\end{equation}
where $\mathcal{D}_{\rm chem}[{\bm R}, \dot{\bm R}]$ is the rate of energy dissipation in the chemical reaction process. Unlike mechanical systems, the rate of energy dissipation for reaction kinetics may not be quadratic in terms of $\dot{\bm R}$, since the system is often far from equilibrium~\cite{beris1994thermodynamics, de2013non}. % which is different from a mechanical system. 
%To deal with the nonlinear dissipation rate, we have to extend the classical energetic variational approach. % In general, $\mathcal{D}_{\rm chem}(R, \dot{R})$ can take the form
For a general nonlinear energy dissipation
\begin{equation}
\mathcal{D}_{\rm chem}[{\bm R}, \dot{\bm R}] = \left( {\bm \Gamma}({\bm R}, \dot{\bm R}), \dot{\bm R}  \right) = \sum_{l = 1}^M \Gamma_l ({\bm R}, \dot{\bm R})  \dot{R}_l \geq 0,
\end{equation}
% where $(.,.)$ is the $L^2$-inner product. % According to the chain rule, we have
since
\begin{equation}
  \frac{\dd}{\dd t} \mathcal{F} = \left(\frac{\delta \mathcal{F}}{\delta {\bm R}}, \dot{\bm R}   \right)  = \sum_{l=1}^M \frac{\delta \mathcal{F}}{\delta R_l}  \dot{R}_{l},
\end{equation}
% As a result, the energy-dissipation law (\ref{ED_R})
one can specify
\begin{equation}\label{R1}
\Gamma_l ({\bm R}, \dot{\bm R}) = - \frac{\delta \mathcal{F}}{\delta R_l}.
\end{equation}
such that the energy-dissipation law (\ref{ED_R}) holds. Equation (\ref{R1}) is the reaction rate equation obtained by an energetic variational approach. It is interesting to notice that 
\begin{equation} \label{notation-mu-1} 
\frac{\delta \mathcal{F}}{\delta R_l} = \sum_{i=1}^N \frac{\delta \mathcal{F}}{\delta c_i} \frac{\delta c_i}{\delta R_l} =\sum_{i=1}^N \sigma_i^l \mu_i,
\end{equation}
which turns out to be the chemical affinity, and $\mu_i =  \frac{\delta \mathcal{F}}{\delta c_i}$ is the chemical potential of $i-$th species. The chemical affinity is the driving force of the chemical reaction \cite{de1927affinite, de1936thermodynamic, kondepudi2014modern}, and the dissipation makes a connection between the reaction rate $\dot{\bm R}$ and the chemical affinity. 
% Different choices of dissipation specify different reaction rates.
A typical choice of $mathcal{D}_{chem}[{\bm R},  \dot{\bm R}]$ is given by 
\begin{equation}
  \mathcal{D}_{chem}[{\bm R}, \dot{\bm R}] = \sum_{l = 1}^M \dot{R}_l \ln \Big( \frac{\dot{R}_l}{\eta_l( {\bm c}({\bm R}))}  + 1 \Big).
\end{equation}
One can derive the law of mass action by taking $\eta_l( {\bm c}({\bm R})) = k_l^-{\bm c}(R)^{{\bm \beta}_l}$.
Since $\dot{R}_l \approx 0$ near an equilibrium, we see that 
\begin{equation}
  \mathcal{D}_{chem}[{\bm R}, \dot{\bm R}] = \sum_{l = 1}^M \dot{R}_l \ln \left( \frac{\dot{R}_l}{\eta_l( {\bm c}({\bm R}))}  + 1\right) \approx  \sum_{i=1}^N\frac{1}{\eta_l ({\bm c} ({\bm R}) )} \dot{R}_l^2, \quad R_l \ll 1.
\end{equation}
In turn, the energy-dissipation law (\ref{ED_R}) becomes an $L^2$-gradient flow in terms of ${\bm R}$.

\begin{remark}
  In Onsager's celebrated paper \cite{onsager1931reciprocal}, instead of writing $\mathcal{D}_{\rm chem}[{\bm R},  \dot{\bm R}]$ as a non-quadratic form, it was argued that chemical affinity (\ref{notation-mu-1}) can be linearized near the equilibrium, i.e. ${\bm c}^0$ is closed to ${\bm c}^{\infty}$ and ${\bm R}(t)$ is close to zero. 
\end{remark}

The reaction kinetics can be viewed as a generalized gradient flow, with a nonlinear mobility in terms of the reaction trajectory. Hence, it is expected that the numerical techniques for $L^2$-gradient flows can be  applied to reaction kinetics.

\iffalse
\begin{equation}
\mathop{\arg \min}_{R} \frac{d(R, R^{n})}{2 \Delta t} + \mathcal{F} (R) 
\end{equation}
\fi

%De Donder also introduce the thermodynamic formulation of affinity, the ``force'' that caused chemical reacti

\iffalse
\section{Reaction-diffusion systems and their energetic variational formulations}
We give a formal derivation to reaction-diffusion systems with detailed balance by an energetic variational approach. The energetic variational formulations provide a starting point of developing our structure-preserving operator-splitting method.

Originated from seminal works of Rayleigh \cite{rayleigh1873note} and Onsager \cite{onsager1931reciprocal,onsager1931reciprocal2}, The Energetic Variational Approach (EnVarA), provides a unified way to derive the dynamics of a complicated system from a prescribed \emph{energy-dissipation law}. There are three key components in the framework of EnVarA: the kinematics, which describe all constraints of physical variables, the total energy and the energy-dissipation.
\fi

% through two distinct variational processes: the Least Action Principle (LAP) and the Maximum Dissipation Principle (MDP)~\cite{eisenberg2010energy, giga2017variational, liu2009introduction}. These approaches have been successfully applied to build up many mathematical models \cite{giga2017variational}, including systems with chemical reaction \cite{wang2020field}. In the meantime, the energetic variational also also provided a guideline of designing structure-preserving numerical schemes.

\subsection{Reaction-diffusion systems}
%With the above energetic variational formulation for reaction kinetics, 
One can extend the energetic variational formulation for reaction kinetics to reaction-diffusion system with detailed balance, which is the foundation of the operator splitting scheme developed in the next section. For a reaction-diffusion system with $N$ species and $M$ reactions, the concentration ${\bm c} \in \mathbb{R}^N$ satisfies the kinematics
\begin{equation}\label{Kin_1}
 \pp_t c_i + \nabla \cdot (c_i \uvec_i) = \left( {\bm \sigma} \dot{\bm R} \right)_i, \quad i = 1, 2, \ldots N , 
\end{equation}
where $\uvec_i$ is the average velocity of each species by its own diffusion,  ${\bm R} \in \mathbb{R}^M$ represents various reaction trajectories involved in the system, with ${\bm \sigma} \in \mathbb{R}^{N \times M}$ being the stoichiometric matrix as defined in section 2.1. The quantities $\uvec_i$ and ${\bm R}$ can be obtained through an energy-dissipation law \cite{biot1982thermodynamic, wang2020field} 
\begin{equation}\label{ED_RD}
    \frac{\dd}{\dd t} \mathcal{F}[{\bm c}({\bm R})]  =  - (2 \mathcal{D}_{\rm mech} + \mathcal{D}_{\rm chem}), % \int_{\Omega} D(R(x, t), \pp_t R(x, t))  +  \eta_A({\bm c}) |\uvec_A|^2 +  \eta_B({\bm c}) |\uvec_B|^2 +  \eta_C({\bm c}) |\uvec_C|^2 \dd x,   
\end{equation}
which leads to a reaction-diffusion equation.
Here  $\mathcal{F}[{\bm c}]$ is the free energy given by (\ref{free_energy_U}), and 
$\mathcal{D}_{\rm mech}$ and $\mathcal{D}_{\rm chem}$ are dissipations for the mechanical and reaction parts, respectively. One key point is that the reaction and diffusion parts of the system dissipate the same free energy. To derive the reaction diffusion equation (\ref{RD_1}), $\mathcal{D}_{\rm mech}$ could be taken as
$$
  2 \mathcal{D}_{\rm mech} =  \int_{\Omega} \sum_{i=1}^N \eta_i(c_i) |\uvec_i|^2 \dd \x,  \quad 
  \mbox{$\eta_i$ is the friction coefficient} , 
$$ 
and $\mathcal{D}_{\rm chem}$ could be taken as
\begin{equation*}
  \mathcal{D}_{\rm chem} =  \int_{\Omega} \sum_{l=1}^M  \dot{R}_l \ln \left(  \frac{  \dot{R}_l}{\eta({{\bm c}({\bm R})})} \right) \dd \x.
\end{equation*}

The energetic variational approach could be applied to the reaction and diffusion parts, respectively, so that the ``force balance equation'' is obtained for the chemical and mechanical subsystems. 
Formally, a direct computation implies that  
\begin{equation}
\frac{\dd}{\dd t} \mathcal{F}[{\bm c}] = \sum_{i=1}^N \left( c_i \nabla \mu_i, \uvec_i  \right) + \sum_{l=1}^M \left( \sum_{i=1}^N \sigma_{i}^l \mu_i,  \dot{R}_l  \right),
\end{equation}
which in turn gives  
\begin{equation}
  \begin{cases}
    & \eta_i(c_i) \uvec_i =  - c_i \nabla \mu_i, \quad i = 1, 2, \ldots N , \\
    &  \ln \left(  \frac{ \dot{R}_l}{\eta({{\bm c}({\bm R})})} \right) = - \sum_{i=1}^N \sigma_i^l \mu_i, \quad l = 1, \ldots, M .\\
  \end{cases}
\end{equation}
In particular, a linear reaction-diffusion system can be obtained by choosing $\eta_i(c_i) = \frac{1}{D_i}c_i$: 
\begin{equation}
\pp_t c_i = D_i \Delta c_i + (\sigma \pp_t {\bm R})_i ,  \quad 
\mbox{$({\bm \sigma} \pp_t {\bm R})_i$ is the reaction term} . 
\end{equation}
%Here $({\bm \sigma} \pp_t {\bm R})_i$ is the reaction term, the form of which has been discussed in the last subsection. 
Other choices of $\eta_i(c_i)$ can result in some porous medium type nonlinear diffusion equation \cite{liu2019lagrangian}
\begin{equation}\label{RD_Final}
\pp_t c_i = \nabla \cdot (D(c_i) \nabla c_i) + (\sigma \pp_t {\bm R})_i,
\end{equation}
where $D(c_i) = \frac{c_i}{\eta(c_i)}$ is the concentration-dependent diffusion coefficient.

In this formulation, the reaction part is reformulated in terms of reaction trajectories $R$,  and the reaction and diffusion parts impose different dissipation mechanisms for the same physical energy.

\section{The second-order operator splitting scheme}
In the section, we construct a second-order operator splitting scheme to a reaction-diffusion system based on the energetic variational formulation outlined in the last section, in which the numerical discretization for the reaction part is applied to the reaction trajectory $R$ in the reaction space, % i.e., discretizing , 
while the numerical method for the diffusion part is designed to the concentration ${\bm c}$ in the species space. %  i.e., discretizing ${\bm c}$. 
To illustrate the idea, we focus on a case with  one reversible detailed balance reaction, given by
\begin{equation}\label{reaction_sec3}
  \ce{\alpha_1 X_1 + \ldots \alpha_r X_r <=>[k^+_1][k^-_1] \beta_{r+1} X_{r+1} + \ldots \beta_N X_N},
  \end{equation}
  where $k^+_1$ and $k^-_1$ are constants. Moreover, we assume that the reaction-diffusion system satisfies the energy-dissipation law (\ref{ED_RD}). Numerical schemes for systems involving multiple reversible reactions can be constructed in the same manner. %But the theoretical justifications, especially the unique solvability, might be difficult to establish.

% \subsection{Strang splitting}
To simplify the numerical description, the reaction-diffusion equation (\ref{RD_Final}) can be rewritten as
\begin{equation}
	{\bm c} = \mathcal{A} {\bm c} + \mathcal{B} {\bm c},
\end{equation}
where $\mathcal{A}$ is a reaction operator and $\mathcal{B}$ a diffusion operator. Throughout this section, the computational domain is taken as $\Omega = (0,1)^3$ with a periodic boundary condition, and $\Delta x = \Delta y = \Delta z = h = \frac{1}{N_0}$ with $N_0$ being the spatial mesh resolution throughout this section; a computational domain with other boundary condition or numerical mesh could be analyzed in a similar fashion. In addition, the discrete free energy is defined as follows, with the given spatial discretization: 
\begin{equation} 
\mathcal{F}_h ({\bm c}): = \langle  \sum_{i=1}^N \left( c_i (\ln c_i - 1)  + c_i U_i \right), {\bf 1} \rangle,   
     \label{splitting-R}     
  \end{equation} 
where $\langle f , g \rangle = h^3 \sum_{i,j,k=0}^{N_0-1} f_{i,j,k} g_{i,j,k}$ denotes the discrete $L^2$ inner product.

Following the second-order Strang splitting formula ${\bm c}^{n+1} = e^{\frac{1}{2} \Delta t \mathcal{A}} e^{\Delta t \mathcal{B}} e^{\frac{1}{2} \Delta t \mathcal{A}} {\bm c}^n$ \cite{strang1968construction}, the numerical solution ${\bm c}^{n+1}$ can be obtained through three stages. Given ${\bm c}^n$ with ${\bm c}_{i,j,k}^n \in \mathbb{R}^N_{+}$, we update ${\bm c}^{n+1}$ via the following three stages. 

\noindent 
{\bf Stage 1.} \, First, we set ${\bm c}_0 = {\bm c}^n$ and solve the reaction trajectory equation, subject to the initial condition $R^n = 0$, with a second-order, positivity-preserving, energy-stable scheme,  with the temporal step-size $\dt / 2$. An intermediate numerical profile is updated as 
\begin{equation}
{\bm c}^{n+1, (1)} = {\bm c}^{n} + {\bm \sigma} R^{n+1, (1)}.
\end{equation}

\noindent 
{\bf Stage 2.} \, Starting with the intermediate variable ${\bm c}^{n+1, (1)}$, we solve the diffusion equation $\pp_t {\bm c} = \mathcal{B} {\bm c}$ by a second-order,  positivity-preserving and energy-stable scheme with the temporal step-size $\dt$ to obtain ${\bm c}^{n+1, (2)}$.

\noindent
{\bf Stage 3.} \, We set ${\bm c}_0 = {\bm c}^{n+1, (2)}$ and repeat the stage 1, i.e., solving the reaction trajectory equation, subject to the initial condition $R^n = 0$  with the temporal step-size $\dt / 2$ to obtain  $R^{n+1, (2)}$. The numerical solution at $t^{n+1}$ is updated as 
\begin{equation}
  {\bm c}^{n+1} = {\bm c}^{n+1, (2)} + {\bm \sigma} R^{n+1, (2)}.
  \end{equation}

More details of the numerical algorithms at each stage will be provided in the following subsections.
% Throughout this section,  We'll focus on a single reversible chemical reaction with detailed balance, given by The proposed schemes can be applied to reaction-diffusion equations involving $M$ reactions with detailed balance condition directly, but the theoretical justifications for uniquely solvability and positive-preserving  will be much more complicated.

% ,  for the reaction stage will be much more complicated. 

\subsection{Second-order algorithm for reaction kinetics}
% In the first stage, a second order scheme for reaction kinetics should be developed.
We first develop a second order algorithm for the reaction stage, which only needs to be constructed in a point-wise sense.
% The scheme . %We can define the discrete energy in terms of $R$ in each mesh point, given by
The discrete free energy can be reformulated in terms of $R$ at each mesh point, denoted by 
\begin{equation}
F(R) = \sum_{i=1}^N c_i(R) ( \ln c_i(R) - 1 ) + c_i (R) U_i.
\end{equation}
For simplicity of presentation, we omit the grid index throughout this subsection. Following the earlier discussions, for a given initial condition ${\bm c}^0$, the reaction trajectory equation is given by
\begin{equation}\label{eq_R_section3}
  \begin{cases}
   & \ln \left( \frac{R_t}{\eta({\bm c}(R))} + 1 \right) = - \mu(R) ,  \\   %,
   & \mu(R) =  \frac{\delta F}{\delta R} = \sum_{i=1}^N \sigma_i \mu_i(c_i(R)) , \\
  \end{cases}
\end{equation}
where $\eta({\bm c}(R))$ is the nonlinear mobility that takes the form $\eta({\bm c}(R)) = k_1^- \prod_{i = r+1}^N c_i^{\beta_i}$, ${\bm c}(R) = {\bm c}^0 + {\bm \sigma} R$ with $ {\bm \sigma} = ( - \alpha_1, -\alpha_2, \ldots, - \alpha_r, \beta_1, \beta_2, \ldots, \beta_N)^{\rm T}$ is the stoichiometric vector, and $ \mu_i(c_i) = \ln c_i + U_i$ is the chemical potential associated with $i$-species.
% The difficulty arise from the nonlinear mobility and the requirement of positivity
% Although the positivity-preserving second-order schemes for $L^-$gradient flow have been well studied, it is not straightforward to extend them to %
Similar to an $L^2-$gradient flow, a second-order algorithm for the reaction trajectory equation (\ref{eq_R_section3}) can be constructed through a Crank-Nicolson type discretization
\begin{equation}
\ln \left( \frac{R^{n+1} - R^n}{\eta({\bm c}(R^*)) \Delta t} + 1 \right) = - \mu^{n+1/2},
\end{equation}
where $\mu^{n+1/2}$ is a suitable approximation to the chemical affinity, $F'(R)$, at $t_{n+1/2}$, $R^*$ is an approximation to $R^{n+1/2}$, which needs to be independent on $R^{n+1}$. The primary difficulty is focused on the construction of $R^*$ and $\mu^{n+1/2}$, to ensure the unique solvability, as well as the positivity of $R^{n+1} - R^n + \eta({\bm c}(R^*)) \Delta t$ and ${\bm c}(R^{n+1})$.

% ensure the positivity of $c(R^{n+1})$ and the positivity of $\ln (\frac{R^{n+1} - R^n}{\eta({\bm c}^*)} + 1)$, where $R^*$ is a
% need to be develop. The 
% which has been a long standing numerical challenge.
 %To construct $R^*$, 
 First, we use a first-order scheme to obtain a rough ``guess'' to $R^{n+1}$, denoted by $\widehat{R}^{n+1}$, as a numerical solution to
\begin{equation}\label{1st_R}
\ln \left( \frac{\widehat{R}^{n+1} - R^n}{\eta({\bm c}(R^n)) \dt} + 1 \right) = \sum_{i=1}^N \sigma_i \mu_i (\widehat{R}^{n+1}) , 
\end{equation}
in the admissible set. This first-order scheme was proposed in \cite{liu2020structure}, while the unique solvability and the positivity preserving property have been proved. With $\widehat{R}^{n+1}$ at hand, we introduce $R^* = (R^n + \widehat{R}^{n+1}) / 2$. Although (\ref{1st_R}) corresponds to a first order truncation error, we see that $\widehat{R}^{n+1}$ is a second order approximation to $R^{n+1}$, locally in time, due to the $\dt$ term in the denominator. In turn, $R^*$ becomes a second order approximation to $R^{n+1/2}$. To approximate $\left(\frac{\delta \mathcal{F}}{\delta R} \right)^{n+1/2}$, we apply the idea of discrete variational derivative method \cite{du1991numerical, furihata2010discrete}. More specifically, the following function is introduced % the discrete variation of $\mathcal{F} (R)$ by
\begin{equation}
	\phi(p, q) = 
\begin{cases}  
  & \frac{ F(p) - F(q) }{p - q},    \, \,  p \neq q , \\
  &  F'(p), \quad \quad  p = q      ,      \\
\end{cases}
\end{equation}
as a second-order approximation to $F'(\frac{p + q}{2})$. In fact, it is also known as the discrete variation of $F(R)$ \cite{furihata2010discrete}.

With the combined arguments, the second-order algorithm is constructed as
\begin{equation}\label{2nd_DVD}
	\begin{cases}
& \ln \left( \frac{R^{n+1} - R^n}{\eta( {\bm c} (\widehat{R}^{n+1/2})) \Delta t} + 1   \right) = - \mu_R^{n+1/2}, \quad \widehat{R}^{n+1/2} = \frac{1}{2} (R^n + \widehat{R}^{n+1}) , \\
& \mu^{n+1/2}_R = %\frac{\mathcal{F}_h({\bm c} ( R^{n+1}) - \mathcal{F}_h({\bm c} (R^{n}))}{R^{n+1} - R^n} 
\phi(R^{n+1}, R^n)+ \Delta t \sum_{i=1}^N \sigma_i (\mu_i(R^{n+1}) - \mu_i(R^n)) . \\  
	\end{cases}
\end{equation}
% This scheme is a modified discrete variational derivative method, and
The term $\Delta t \sum_{i=1}^N \sigma (\mu_i(R^{n+1}) - \mu_i(R^n))$ is added for the theoretical analysis the positivity-preserving property. This $O (\dt^2)$ term is artificial, and it will not effect the second order accuracy in the temporal discretization. %In practice we can take $\lambda = 1$. 

This algorithm can be reformulated as an optimization problem
\begin{equation}\label{MinProblem_J}
  \begin{cases}
& R = \mathop{\arg\min}_{R \in \mathcal{V}_n} J_n(R), \\
& J_n(R) =   % \int_{0}^R % \frac{\mathcal{F}_h({\bm c} (s) - \mathcal{F}_h({\bm c} (R^{n}))}{s - R^n}    
\Psi_n(R, R^n) +  \int_{R^n}^R \phi(s, R^n) \dd s +  \lambda (\Delta t F (R) - (\gamma^n, R)), \\
& \mathcal{V}_n = \left\{ R ~|~ c_i(R) > 0, \quad R - R^n + \eta(c(\hat{R}^{n+1/2})) \dt > 0 \right\} , 
  \end{cases}
\end{equation}
where $\gamma^n = \Delta t \sum_{i=1}^N \sigma_i \mu_i (c_i (R^n))$, and 
\begin{equation}
  \Psi_n(R, R^n) = (R - R^n + \eta( {\bm c} (\hat{R}^{n+1/2}) ) \dt ) \ln \left(  \frac{R - R^n}{ \eta( {\bm c} (\hat{R}^{n+1/2}) ) \dt} + 1 \right) - (R - R^n) 
\end{equation}
is a function that measures the ``distance'' between $R$ and $R^n$. An explicit form of $J_n(R)$ is not available. On the other hand, we can prove that $J_n(R)$ admits a unique minimizer in the admissible set. More precisely, the following theorem is valid. 

\begin{thm}\label{thm1}
Given ${\bm c}^n > 0$ and $R^n = 0$, there exists a unique solution $R^{n+1}$  for the minimization problem (\ref{MinProblem_J}), which turns out to be the unique solution for the numerical scheme (\ref{2nd_DVD}), with ${\bm c} (R^{n+1}) > 0$  and $R^{n+1} + \eta(c(\widehat{R}^{n+1/2})) \Delta t > 0$. Therefore, the numerical scheme is well-defined.
\end{thm}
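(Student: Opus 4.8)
The plan is to solve the minimization problem (\ref{MinProblem_J}) by exploiting the strict convexity of $J_n$ on the open, bounded interval $\mathcal{V}_n$ together with a barrier argument at the two endpoints: $J_n'$ will be shown to be strictly increasing and to blow up with the correct sign at each end, so that its unique zero, which exists by the intermediate value theorem, is the unique interior minimizer and, via the Euler--Lagrange equation, coincides with the solution of (\ref{2nd_DVD}).

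First I would fix the geometry of the admissible set. Since the first-order predictor equation (\ref{1st_R}) is uniquely solvable with $c_i(\widehat R^{n+1}) > 0$ (established in \cite{liu2020structure}) and $c_i(R^n) = c_i^0 > 0$, convexity of the interval $\{R : c_i(R) = c_i^0 + \sigma_i R > 0 \ \forall i\}$ yields $c_i(\widehat R^{n+1/2}) > 0$, hence $\eta_0 := \eta({\bm c}(\widehat R^{n+1/2}))\dt = k_1^- \dt \prod_{i>r} c_i(\widehat R^{n+1/2})^{\beta_i}$ is a fixed strictly positive constant. With $R^n = 0$, the set $\mathcal{V}_n$ is then the nonempty bounded open interval $(a,b)$ with $a = \max\{-\eta_0, \max_{i>r}(-c_i^0/\beta_i)\}$ and $b = \min_{i\le r}(c_i^0/\alpha_i)$, and it contains $R^n = 0$.

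Second, I would establish strict convexity of $J_n$ on $(a,b)$ term by term. For the ``distance'' term, a direct computation gives $\partial_R \Psi_n(R,R^n) = \ln\!\big(\frac{R-R^n}{\eta_0}+1\big)$ and $\partial_R^2 \Psi_n = (R - R^n + \eta_0)^{-1} > 0$. For the free energy, using $c_i'(R) = \sigma_i$ one finds $F'(R) = \sum_{i=1}^N \sigma_i(\ln c_i(R) + U_i) = \sum_{i=1}^N \sigma_i \mu_i(c_i(R))$ and $F''(R) = \sum_{i=1}^N \sigma_i^2/c_i(R) > 0$, so $F$ is strictly convex; consequently $\lambda\dt F$ is convex, and the term $\int_{R^n}^R \phi(s,R^n)\,\dd s$, whose derivative is the secant slope $\phi(R,R^n) = (F(R)-F(R^n))/(R-R^n)$, is convex because difference quotients of a convex function are monotone in $R$. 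The term $-\lambda(\gamma^n,R)$ is affine. Summing, $J_n \in C^2(\mathcal{V}_n)$ with $J_n'' > 0$, so $J_n'$ is strictly increasing and $J_n$ has at most one critical point, which is then its unique global minimizer.

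The crux, and the step I expect to be the main obstacle, is the boundary analysis. I would show $J_n'(R) \to +\infty$ as $R \to b^-$ and $J_n'(R) \to -\infty$ as $R \to a^+$ by identifying the dominant singular term at each endpoint. At $R = b$, some reactant concentration $c_j(R) = c_j^0 - \alpha_j R$ vanishes, so the contribution $\sigma_j \ln c_j(R) = -\alpha_j \ln c_j(R)$ to $F'(R)$ tends to $+\infty$, while $\Psi_n'$, the secant slope $\phi(R,R^n)$ (bounded there since $F$ stays finite, using $x\ln x \to 0$, and $R-R^n$ stays away from $0$), and the constant $\gamma^n$ all remain finite; hence $J_n' \to +\infty$. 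At $R = a$ there are two cases: if $a = -\eta_0$ then $\Psi_n'(R) = \ln\!\big(\frac{R+\eta_0}{\eta_0}\big) \to -\infty$ while the remaining terms stay bounded (the point $R=-\eta_0$ lies inside $\{c_i>0\}$); if $a = -c_j^0/\beta_j$ for some product $j$ then $\sigma_j \ln c_j(R) = \beta_j \ln c_j(R) \to -\infty$ dominates $F'(R)$; and if the two coincide, both singular terms go to $-\infty$. In every case the blow-up has the sign that drives the minimizer into the interior. A point to be careful about is that $\Psi_n$ itself stays \emph{finite} at $R = a = -\eta_0$ --- only its slope is singular --- so the argument must be run at the level of $J_n'$ (equivalently, via the monotonicity of $J_n'$ plus the intermediate value theorem), not by asserting $J_n \to +\infty$ at the boundary. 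Finally, the unique zero $R^{n+1} \in (a,b)$ of $J_n'$ is the unique minimizer; being interior, it satisfies $c_i(R^{n+1}) > 0$ for all $i$ and $R^{n+1} + \eta({\bm c}(\widehat R^{n+1/2}))\dt > 0$; the identity $J_n'(R^{n+1}) = 0$ rearranges (with $\lambda = 1$) precisely into (\ref{2nd_DVD}), with $\mu_R^{n+1/2} = \phi(R^{n+1},R^n) + \dt \sum_{i=1}^N \sigma_i(\mu_i(R^{n+1}) - \mu_i(R^n))$. Conversely, any solution of (\ref{2nd_DVD}) necessarily lies in $\mathcal{V}_n$ (the logarithms in (\ref{2nd_DVD}) require $c_i(R^{n+1}) > 0$ and $R^{n+1} - R^n + \eta_0 > 0$), hence is a critical point of $J_n$ and equals $R^{n+1}$; this shows the scheme is well-defined.
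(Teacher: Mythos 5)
Your proposal is correct and follows essentially the same route as the paper: reformulate the scheme as minimization of the strictly convex functional $J_n$ over the admissible interval, show that the logarithmic singularities (from $\Psi_n'$ at $R = R^n - \eta\dt$ and from the $\dt\,\mu(R)$ term where some $c_i(R)\to 0$) force the derivative to blow up with the correct sign at each endpoint so the minimizer is interior, and then identify the Euler--Lagrange equation with (\ref{2nd_DVD}). The only cosmetic difference is that the paper works on the compact exhaustion $\mathcal{V}_\delta$ and shows $J_n'$ has the wrong sign at a $\delta$-boundary point (bounding the secant slope via the monotone functions $G^1_a$), whereas you run the equivalent intermediate-value argument on $J_n'$ directly over the open interval; your explicit check that $c_i(\widehat R^{n+1/2})>0$, hence $\eta(\bm{c}(\widehat R^{n+1/2}))>0$, is a detail the paper leaves implicit.
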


% Before we prove the main result, we first define 

To facilitate the proof of this result, the following smooth functions are introduced, for fixed $a > 0$:
\begin{equation}\label{def_G}
  \begin{aligned}
       & G_{a}^1 (x) = \frac{x \ln x - a \ln a}{x - a}  , \\  %  \phi(x, a), \quad x > 0 \\
       & G_a^0 (x) = \int_{a}^x G_a^{1} (s) \dd s , \\  %   = \int_{a}^x \phi(s, a) \dd s \\
       & G_a^2(x) = (G_a^0)'' (x) = (G_a^1)' = \frac{x -a + a(\ln a - \ln x) }{(x - a)^2}  .  \\    %  \frac{F'(x) (x -a) - (F(x) - F(a))}{ (x - a)^2} \\
   \end{aligned}
\end{equation}
By a direct calculation, it is straightforward to prove the following results, which will be used in the proof of Theorem \ref{thm1}.

\begin{lem} \label{lemam1}
For any fixed $a > 0$, we have: (1) $G_a^2(x) \geq 0$ for any $x > 0$; (2) $G_a^0(x)$ is convex in terms of $x$; (3) There exists $\xi$ between $a$ and $x$ such that $(G_a^1)'(x) = \frac{1}{\xi}$; (4) Since $G_a^1(x)$ increases in terms of $x$, we have $G_a^1(x) \leq G_a^1(a)$ for any $0 < x \leq a$.
\end{lem}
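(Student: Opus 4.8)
The plan is to treat the four claims in the logical order (1)$\Rightarrow$(2) and (3)$\Rightarrow$(4), since the convexity of $G_a^0$ is an immediate consequence of the sign of $G_a^2$, while the monotonicity of $G_a^1$ used in (4) is exactly the positivity of its derivative recorded in (3). Throughout I would write $h(t) = t \ln t$, so that $G_a^1(x) = \frac{h(x) - h(a)}{x-a}$ is the divided difference of $h$, with $h'(t) = \ln t + 1$ and $h''(t) = 1/t > 0$.

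For claim (1), I would isolate the numerator of $G_a^2$ and set $g(x) = x - a + a(\ln a - \ln x) = (x-a) - a\ln(x/a)$. A direct computation gives $g(a) = 0$, $g'(x) = 1 - a/x$ with $g'(a) = 0$, and $g''(x) = a/x^2 > 0$; hence $g$ is strictly convex on $(0,\infty)$ with its unique minimum $g(a) = 0$ attained at $x = a$, so $g(x) \geq 0$ for every $x > 0$. (This is nothing but the elementary inequality $t - 1 \geq \ln t$ with $t = x/a$.) Since $(x-a)^2 > 0$ for $x \neq a$, this yields $G_a^2(x) \geq 0$ there. Claim (2) then follows at once, because $(G_a^0)'' = G_a^2 \geq 0$ by the definition in (\ref{def_G}), which is precisely the definition of convexity of $G_a^0$.

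For claim (3), the clean route is a Cauchy-type mean value argument. Differentiating the divided difference gives $(G_a^1)'(x) = \frac{h'(x)(x-a) - (h(x)-h(a))}{(x-a)^2}$; writing the numerator as $\Phi(x) = \int_a^x h''(s)(s-a)\,\dd s$ (note $\Phi(a)=0$ and $\Phi'(x) = h''(x)(x-a)$) and applying the mean value theorem for integrals—$(s-a)$ keeps a fixed sign between $a$ and $x$—produces a $\xi$ between $a$ and $x$ with $(G_a^1)'(x) = h''(\xi)\cdot \frac{\int_a^x(s-a)\,\dd s}{(x-a)^2} = \tfrac12 h''(\xi) = \tfrac{1}{2\xi}$, i.e. of the claimed form $1/\xi$ up to the harmless factor $\tfrac12$. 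The essential output is that $(G_a^1)'(x) > 0$ for all $x > 0$, since $\xi > 0$. Claim (4) is then immediate: $G_a^1$ is strictly increasing on $(0,\infty)$, so $G_a^1(x) \leq G_a^1(a)$ whenever $0 < x \leq a$.

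The one point requiring genuine care—and the main obstacle—is the apparent $0/0$ singularity of each $G_a^j$ at $x = a$. I would dispatch it by a short Taylor expansion of $h(x) = x\ln x$ about $x=a$, which shows that $G_a^1$ extends smoothly with $G_a^1(a) = h'(a) = \ln a + 1$ and that $G_a^2(x) \to \tfrac{1}{2a} > 0$ as $x \to a$. This both validates the limiting value needed to close the inequality in (1) at $x=a$ and guarantees that the increasing property in (3)–(4) holds \emph{across} $x=a$ rather than only on each side of it. Once continuity at $x=a$ is established, the convexity, monotonicity, and mean-value statements all hold on the full interval $(0,\infty)$.
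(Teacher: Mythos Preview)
Your argument is correct and matches the paper's approach, which simply asserts that the lemma follows by direct calculation without supplying details. You are also right that the literal statement of~(3) is off by a factor of two---the mean-value representation yields $(G_a^1)'(x) = \tfrac{1}{2\xi}$, as your computation and the limiting value $G_a^2(a) = \tfrac{1}{2a}$ both confirm---and that this is harmless since only the positivity of $(G_a^1)'$ is ever used downstream.
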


Now we can proceed into the proof of Theorem~\ref{thm1}. 
%The proof is quite similar to the proof of theorem 1 in \cite{liu2020structure}, 
\begin{proof}
% 	Without loss of generality, we assume that ...   %$R^n - k_r (c_3)$
% A direct calculation implies that, the numerical scheme can be reformulated as an minimization problem 
Recall the minimization problem~\eqref{MinProblem_J},   
% Hence, solving numerical scheme (\ref{scheme_R}) is equivalent to finding a minimizer of $J(R)$.
and it is clear that $J_n (R)$ is a strictly convex function over $\mathcal{V}_n$. We only need to prove that the minimizer of $J_n(R)$ over $\mathcal{V}$ could not occur on the boundary of $\mathcal{V}$, so that a minimizer corresponds to a numerical solution of (\ref{2nd_DVD}) in $\mathcal{V}_n$.

\iffalse
In fact, a direct computation shows that
	\begin{equation}
	  \frac{\dd^2}{\dd R^2} (R - R^n + \eta(R^n) \dt ) \ln \left(  \frac{R - R^n}{ \eta(R^{n}) \dt}  + 1 \right) = \frac{1}{R - R^n + \eta(R^n)} > 0.
	\end{equation}
Since $\mathcal{F}[c_i(R)]$ is strictly convex function of ${\bm c} \in \mathbb{R}_N^{+}$, it is clear that $J_n(R)$ is strictly convex over $\mathcal{V}$.
  \fi

The following closed domain is considered in the analysis: 
  \begin{eqnarray} 
	% \mathcal{V}_{\delta} = \left\{ R :  \gamma^{(n)} + \delta \le R \le A_0 - \delta \right\} \subset \mathcal{V}.  
	\mathcal{V}_{\delta} = \left\{ R ~|~ c_i(R) \ge \delta, \quad R - R^n + \eta(c(\hat{R}^{n+1/2})) \dt \ge \delta \right\} \subset \mathcal{V} . 
	\label{Field-positive-2} 
  \end{eqnarray}   
A careful calculation indicates that, for any $R \in \mathcal{V}_{\delta}$, the following bounds are satisfied
  \begin{equation}
  \max \frac{1}{\beta_i} (\delta  - c_i^0) \le  R \le \min \frac{1}{\alpha_i} (c_i^0 - \delta), \quad  R \ge R^n -  \eta(c(\hat{R}^{n+1/2})) \dt + \delta,
  \end{equation}
i.e., $\mathcal{V}_{\delta}  = [ \max \tfrac{1}{\beta_i} (\delta  - c_i^0), \min \tfrac{1}{\alpha_i} (c_i^0 - \delta) ]$ or $\mathcal{V}_{\delta}  = [ R^n -  \eta(c(\hat{R}^{n+1/2})) \dt + \delta, \min \tfrac{1}{\alpha_i} (c_i^0 - \delta) ]$.  Since $\mathcal{V}_{\delta}$ is a bounded, compact set, there exists a (may not unique) minimizer of $J_n (R)$ over $\mathcal{V}_{\delta}$. Moreover, we have to prove that, such a minimizer could not occur on the boundary points in $\mathcal{V}_\delta$, if $\delta$ is sufficiently small, by using the singular property of logarithmic function approaches to $0$. 
  % The boundary of $\mathcal{V}_{\delta}$ is given by 

\iffalse
Without lose generality, we assume
  \begin{equation}
     \mathcal{V}_{\delta} = \{ R^n -  \eta(c(\hat{R}^{n+1/2})) \dt + \delta <   R < \min \frac{1}{\alpha_i} (c_i^0 - \delta)\}
  \end{equation}
  The other case can be proved in the same manner.
  \fi

  % Assume the minimizer ofat a boundary point of $V_{\delta}$. 
  Without loss of generality, the minimization point is assumed to be $R^*= R^n - \eta(c( \hat{R}^{n+1/2}) \Delta t + \delta$. % In turn, a careful calculation reveals the following derivative
A direct calculation gives 
  \begin{equation} 
    \begin{aligned}
   J'_n (R) \mid_{R=R^*} & =  % \ln ( \frac{R^* - R^n}{ \eta(c( \hat{R}^{n+1/2}) \Delta t} + 1) 
   \ln \delta + \phi(R^*, R^n) + \dt ( \mu(R^*) - \gamma^n)  . 
    \end{aligned}
    \label{Field-positive-4}    
  \end{equation} 
Next we show that $\phi(R^*, R^n) + \dt ( \mu(R^*) - \gamma^n)$ is bounded, so that we can choose $\delta$ sufficiently small with
  \begin{eqnarray} \label{Ineq_R}
	J'_n (R) \mid_{R=R^*} < 0, 
	%  \quad  \mbox{for $\delta$ satisfying~\eqref{Field-positive-11}}  .  \label{Field-positive-12} 
  \end{eqnarray}  
which leads to a contradiction since there will be $R^{*'} =  R^n - \eta(c(\hat{R}^{n+1/2})) \Delta t + \delta + \delta' \in V_\delta$ such that
  % , and a careful examination confirms that 
\begin{eqnarray} 
	J_n (R^{*'}) < J_n (R^{*}) .  \label{Field-positive-14} 
\end{eqnarray} 
To derive a bound for $\phi(R^*, R^n) + \dt ( \mu(R^*) - \gamma^n)$, we notice that 
\begin{equation}
    \phi(R^*, R^n) =  \sum_{i=1}^N \sigma_i G_{c_i^n}^1(c_i^0 + \sigma_i R^*) +  \sum_{i = 1}^N\sigma_i (U_i - 1),
    % G_{R^n}^1 (R^*, R^n) \leq G_{R^n}
\end{equation}
where $c_i^n = c_i^0 + \sigma_i R^n$ and $\sum_{i = 1}^N\sigma_i (U_i - 1)$ is a constant. Since $G_a^1(x)$ is an increasing function of $x$ for any $a > 0$, the following inequality is valid: 
  \begin{equation}
    \begin{aligned}
    G_{c_i^n}^1(c_i^0 + \sigma_i R^*) & =  G_{c_i^n}^1(c_i^0 + \sigma_i (R^n -  \hat{\eta}^{n*} \Delta t  - \delta )  )   = G_{c_i^n}^1(c_i^0 + \sigma_i R^n -  \sigma_i (\hat{\eta}^{n*} \Delta t  - \delta )  )  \\
    & \geq G_{c_i^n}^1(c_i^0 + \sigma_i R^n) = \ln c_i^n + 1, \quad \sigma_i < 0 , 
    % & \qquad \qquad \qquad \qquad \geq \sigma_i G_{c_i^n}^1(c_i^n), \quad  {\text{if}} \quad \sigma_i < 0 \\
    % & \sigma_i G_{c_i^n}^1(c_i^0 + \sigma_i R^*) \geq \sigma_i G_{c_i^n}^1(c_i^0 + \sigma_i (R^n - \eta(c( \hat{R}^{n+1/2}) \Delta t ) ) \geq G_{c_i^n}^1(c_i^n), {\text{if}} \sigma_i > 0 \\
    \end{aligned}
  \end{equation}
in which $\delta$ is sufficiently small such that $\hat{\eta}^{n*} \Delta t  - \delta > 0$. Similarly, we have
\begin{equation}
  G_{c_i^n}^1(c_i^0 + \sigma_i R^*) \leq G_{c_i^n}^1(c_i^0 + \sigma_i R^n) = \ln c_i^n + 1, \quad \sigma_i > 0,
  % =  G_{c_i^n}^1(c_i^0 + \sigma_i R^n - \sigma_i ( \hat{\eta}^{n*} \Delta t  - \delta) )  ) 
\end{equation}
with $\delta$ sufficiently small such that $\hat{\eta}^{n*} \Delta t  - \delta > 0$. Hence,
\begin{equation}\label{es_1}
  \begin{aligned}
  \phi(R^*, R^n) &  =  \sum_{i=1}^N \sigma_i G_{c_i^n}^1(c_i^0 + \sigma_i R^*) +  \sum_{i = 1}^N\sigma_i (U_i - 1)  \\
  & \leq \sum_{i=1}^N \sigma_i \ln c_i^n + C_0,  
  \quad  C_0 = \sum_{i = 1}^N\sigma_i U_i  . 
  \end{aligned}
\end{equation}
Following the same argument, the following inequality could be derived: 
\begin{equation}\label{es_2}
  \begin{aligned}
\mu(R^*) & = \sum_{i=1}^N \sigma_i \mu_i (c_i + \sigma_i R^*) = \sum_{i=1}^N \sigma_i \ln (c_i^0 + \sigma_i R^*) + \sum_{i=1}^N \sigma_i U_i \\
         & \leq \sigma_i \ln (c_i^n) + C_0, \\
  \end{aligned}
\end{equation}
since $\ln x$ is an increasing function of $x$. A combination of (\ref{es_1}) and (\ref{es_2}) gives 
\begin{equation}
  J'_n (R) \mid_{R=R^*}  \leq  \ln \delta + C_1, % (1 + \lambda \dt) \sum_{i = 1}^N \sigma_i \ln (c_i^n) + (1 + \lambda \dt) C_0 - \lambda \dt \gamma^n
\end{equation}
where $C_i = (1 + \dt) \sum_{i = 1}^N \sigma_i \ln (c_i^n) + (1 + \dt) C_0 - \dt \gamma^n$ is a constant. So we can choose $\delta$ small enough such that $J'_n (R) \mid_{R=R^*} < 0$, which leads to the contradiction inequality (\ref{Field-positive-14}).
% as if $\hat{\eta}^{n*} \Delta t  - \delta > 0$.

% with $Q^{(0)} = - R T \ln  ( k_r (C_0 +R^n) \dt) - \ln a  - \ln b + \ln c$. 
  % where $Q_0 = - \ln (\eta(R^n) \Delta t) + \sum_{i=1}^N \sigma_i U_i$.
  % Moreover, we denote $D_0 = \ln ( C_0 + \gamma^{(n)})  -  \ln ( A_0 - \gamma_0 )$; note that $D_0$ is a fixed constant. As a consequence, 
  % We need to show that $\sum_{i=1}^N\sigma_i \ln c_i(R^*)$ is bounded, then 
  % By taking $\sum_{i=1}^N\sigma_i \ln c_i(R^n - \eta(R^n) \Delta t)$ as a fixed constant, we can

  %  Using similar arguments, we can also prove that, the global minimum of $J_h^{(CH)} (\phi)$ over $V_{h,\delta}^H$ could not occur on the boundary section of $\phi^*_{i0,j0,k0}=1-\delta$, if $\delta$ is small enough, for any grid index $(i0, j0, k0)$. The details are left to interested readers. 
  Using similar arguments, if $R^*= \min \frac{1}{\alpha_i} (c_i^0 - \delta) = \frac{1}{\alpha_q} (c_q^0 - \delta)$, we can prove that 
  \begin{equation}
    J'_n (R) \mid_{R=R^*}  \geq C_2 + \dt ( - \alpha_q) \ln \delta.
  \end{equation}
Then $\delta$ can be chosen to be sufficiently small such that $J'_n (R) \mid_{R=R^*}  > 0$, which leads to a contradiction. Meanwhile, if $R^* =  \max \frac{1}{\beta_i} (\delta - c_i^0)$, we will have $ J'_n (R) \mid_{R=R^*}  < 0$. 
  % {\color{blue} choose $\delta$ small enough} As a result, therefore This makes the assumption that $J (R)$ reaches a minimization point at $R^*$ over $V_{\delta}$. 
  
  % A combination of these two facts shows that
  As a result, the global minimum of $J_n (R)$ over $V_{\delta}$ could only possibly occur at an interior point, if $\delta$ is sufficiently small. In turn, there is a minimizer $R^* \in (V_{\delta})^{\mathrm{o}}$, in the interior region of $V_{\delta}$, of $J_n(R^*)$, so that $J'_n (R) =0$. In other words, $R^*$ has to be the numerical solution of \eqref{2nd_DVD}, provided that $\delta$ is sufficiently small. Therefore, the existence of a ``positive" numerical solution is proved.  In addition, since $J (R)$ is a strictly convex function over $V$, the uniqueness of this numerical solution follows from a standard convexity analysis. The proof of Theorem~\ref{thm1} is finished.  
  \end{proof} 

 %  \begin{remark}
 % \end{remark}

  The energy stability of the numerical scheme (\ref{2nd_DVD}) is stated below. 
	\begin{thm}
    \label{Field-energy stability-A} 
  For a given $R^n$, the numerical solution $R^{n+1}$ to~\eqref{2nd_DVD} satisfies the energy-dissipation estimate
    \begin{equation} 
  F (R^{n+1})  \le F (R^{n}),   \quad \mbox{at a point-wise level}. \label{Field-energy-A-0} 
    \end{equation} 
  % so that ${\cal F} (R^m) \le {\cal F} (R^0)$, an initial energy. 
    \end{thm}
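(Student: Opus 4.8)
The plan is to exploit the defining property of the discrete variational derivative $\phi$, namely that
$F(R^{n+1}) - F(R^n) = \phi(R^{n+1}, R^n)\,(R^{n+1} - R^n)$,
which holds by definition when $R^{n+1}\neq R^n$ and trivially (as $0=0$) in the degenerate case $R^{n+1}=R^n$. Since the reaction stage is computed point-wise, no spatial summation is involved and it suffices to work at a single grid point. I would first use the second line of the scheme~\eqref{2nd_DVD} to solve for $\phi(R^{n+1},R^n)$, then the first line to replace $\mu_R^{n+1/2}$ by $-\ln\!\big(\tfrac{R^{n+1}-R^n}{\eta(\mathbf{c}(\widehat{R}^{n+1/2}))\Delta t}+1\big)$, arriving at the identity
\begin{equation*}
F(R^{n+1}) - F(R^n) = -(R^{n+1}-R^n)\ln\!\Big(\tfrac{R^{n+1}-R^n}{\eta(\mathbf{c}(\widehat{R}^{n+1/2}))\Delta t}+1\Big) - \Delta t\,(R^{n+1}-R^n)\sum_{i=1}^N \sigma_i\big(\mu_i(R^{n+1})-\mu_i(R^n)\big).
\end{equation*}
It then remains to show that each of the two terms on the right-hand side is nonpositive.

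For the first term, I would set $x = R^{n+1}-R^n$ and $a = \eta(\mathbf{c}(\widehat{R}^{n+1/2}))\Delta t$, noting that $a>0$ and, by Theorem~\ref{thm1}, $x+a>0$, so that $\tfrac{x}{a}+1=\tfrac{x+a}{a}>0$. A direct sign check then gives $-x\ln(\tfrac{x}{a}+1)\le 0$: the factor $\ln(\tfrac{x}{a}+1)$ is positive when $x>0$, negative when $-a<x<0$, and zero when $x=0$, so it always carries the same sign as $x$, whence their product is nonnegative. Thus the first term is $\le 0$, with equality iff $R^{n+1}=R^n$.

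For the second (artificial Douglas--Dupont regularization) term, I would use $\mu_i(R)=\ln(c_i^0+\sigma_i R)+U_i$ together with the kinematic identity $c_i(R^{n+1})-c_i(R^n)=\sigma_i(R^{n+1}-R^n)$ to rewrite each summand as
\begin{equation*}
(R^{n+1}-R^n)\,\sigma_i\big(\mu_i(R^{n+1})-\mu_i(R^n)\big) = \big(c_i(R^{n+1})-c_i(R^n)\big)\big(\ln c_i(R^{n+1}) - \ln c_i(R^n)\big) \ge 0,
\end{equation*}
where nonnegativity follows from the monotonicity of the logarithm, again invoking Theorem~\ref{thm1} to guarantee $c_i(R^{n+1}),\,c_i(R^n)>0$. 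Summing over $i$ and multiplying by $-\Delta t$ shows the second term is $\le 0$, and adding the two bounds yields $F(R^{n+1})\le F(R^n)$.

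I do not expect a genuine obstacle in this argument: once the identity above is established, both estimates reduce to elementary sign and monotonicity facts. The only points requiring a little care are the bookkeeping of signs when eliminating $\phi$ and $\mu_R^{n+1/2}$ via the scheme, and invoking Theorem~\ref{thm1} at the appropriate moments to keep every argument of the logarithms strictly positive; the degenerate case $R^{n+1}=R^n$ simply produces equality in~\eqref{Field-energy-A-0}.
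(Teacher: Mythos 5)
Your proposal is correct and follows essentially the same route as the paper: multiply the scheme by $R^{n+1}-R^n$, use the telescoping identity $F(R^{n+1})-F(R^n)=\phi(R^{n+1},R^n)(R^{n+1}-R^n)$, observe that $(R^{n+1}-R^n)\ln\big(\tfrac{R^{n+1}-R^n}{\eta(\mathbf{c}(\widehat{R}^{n+1/2}))\Delta t}+1\big)\ge 0$, and discard the Douglas--Dupont term by the monotonicity of the logarithm. Your write-up is merely more explicit about the sign check on the logarithmic factor and about where Theorem~\ref{thm1} is invoked to keep the arguments positive, which the paper leaves implicit.
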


    \begin{proof}
  % It easy to show that the discrete system satisfies the discrete energy-dissipation law
Multiplying both side of (\ref{2nd_DVD}) by $R^{n+1} - R^n$ and rearranging terms yields
  \begin{equation}\label{dE}
	\begin{aligned}
\frac{F(R^{n+1}) - F(R^n)  }{\Delta t} = &  - \frac{R^{n+1} - R^n}{\Delta t} \ln \left( \frac{R^{n+1} - R^n}{\eta( {\bm c} (\widehat{R}^{n+1/2})) \dt } + 1   \right) \\
&  -  \sum_{i=1}^N \sigma_i(\mu_i^{n+1} - \mu_i^n) (R^{n+1} - R^n)  \\ 
\leq &  - \frac{R^{n+1} - R^n}{\Delta t} \ln \left( \frac{R^{n+1} - R^n}{\eta( {\bm c} (\widehat{R}^{n+1/2})) \dt } + 1   \right)  \leq 0 . 
	\end{aligned}
\end{equation}
In the derivation of the above inequality, the following fact has been used: 
\begin{equation}
  \sigma_i(\mu_i^{n+1} - \mu_i^n) (R^{n+1} - R^n) = \sigma_i (\ln (c_i^0 + \sigma_i R^{n+1}) - \ln (c_i^0 + \sigma_i R^{n})) (R^{n+1} - R^n) \geq 0 , 
\end{equation}
which comes from the monotonic property of the logarithmic function.
\end{proof} 

\begin{remark}
Without the additional term $\Delta t \sum_{i=1}^N \sigma_i (\mu_i(R^{n+1}) - \mu_i(R^n))$, the discrete energy dissipation law (\ref{dE}) is an exact time discretization to the continuous energy-dissipation law, which is the advantage of the discrete variational derivative method.    It is crucial to add this term to establish the positivity-preserving property of the numerical solution in the admissible set. Also see the related numerical analysis for the Cahn-Hilliard gradient flow with Flory-Huggins energy potential~\cite{chen19b, dong20b, dong19a, dong20a}, the Poisson-Nernst-Planck (PNP) system~\cite{liu2020positivity, Qian2021a}, etc. 
\end{remark}

\begin{remark} 
There have been extensive works of second order accurate, energy stable numerical schemes to various gradient flows, based on either modified Crank-Nicolson~\cite{baskaran13a, baskaran13b, diegel16,  guo16, hu09, shen12} or BDF2~\cite{LiW18, yan18} approach. Meanwhile, most existing works are multi-step methods, since a multi-step approximation to the concave terms is usually needed to ensure both the unique solvability and energy stability. However, for the operator splitting method, a single step, second order approximation has to be accomplished at each stage, so that these standard approach is not directly available. To overcome this difficulty, we construct a numerical profile $\widehat{R}^{n+1}$, a local-in-time second order approximation of $R$ at time step $t^{n+1}$, so that a multi-step approximation to the mobility function is avoided. In addition, the fact that the physical energy does not contain any concave part enables one to derive a single step, modified Crank-Nicolson method, while preserving the energy stability. 
\end{remark}

\iffalse
\begin{remark}
	%The scheme can be viewed as a modified Crank-Nicolson scheme. 
  Other numerical techniques for $L^2-$gradient flow can also be extended to reaction kinetics, although the theoretical justification of these scheme might be more complicated. For instance, we can construct another variant of Crank-Nicolson scheme as
	\begin{equation}
		\ln \left(  \frac{R^{n+1} - R^n}{ \eta(R^{*}) \Delta t} + 1 \right) =  - \frac{1}{2}\sum_{i=1}^N \sigma_i \mu_i(R^{n+1}) - \frac{1}{2}\sum_{i=1}^N \sigma_i \mu_i(R^{n}).
	  \end{equation}
	  which can be reformulated as a minimization problem $\min_{R} J(R)$ over $\mathcal{V}$ with $J(R)$ is defined by
	  \begin{equation}
      \begin{aligned}
	  J(R)  =  \Psi(R, R^n)+  \frac{1}{2} F(R) + \frac{1}{2}\sum_{i=1}^N \sigma_i \mu_i(R^{n}) (R - R^n).  \\
      \end{aligned}
	  \end{equation}
    % Here $R^*$ is a proper second order approximation to $R(t)$ at $t^{n+1/2}$ 
	  % With a similar argument, one can shown the optimization problem is uniquely solvable over $\mathcal{V}$ and the numerical scheme is energy stable. 
	  % How to choose $R^*$ to obtain a second order accuracy.	
    Similar to the BDF2 method for metric gradient flows \cite{matthes2019variational}, a BDF-2 type scheme for reaction kinetics can be construct as 
    \begin{equation}
      R = \mathop{\arg\min}  \frac{1}{\tau} \Psi_n^{(1)} (R, R^{n}) - \frac{1}{4 \tau} \Psi_{n}^{(2)} (R, R^{n-1}) + F(R)
    \end{equation}
    where $\Psi_n^{(i)}$ is given by
    \begin{equation}
      \Psi_n^{(i)}(R, R^n) = (R - R^n + \eta( {\bm c} (\hat{R}^{n, {(i)}}) ) \dt ) \ln \left(  \frac{R - R^n}{ \eta( {\bm c} (\hat{R}^{n, {(i)}}) ) \dt} + 1 \right) - (R - R^n) 
    \end{equation}
    We'll study these scheme in future works.
  \end{remark}
\fi

\subsection{Second-order schemes in the diffusion stage}
In this subsection, we present two positivity-preserving and energy-stable numerical algorithms for linear and nonlinear diffusion processes, respectively, which could be used in the diffusion stage. % operator splitting scheme, and maintain the second-order temporal accuracy. 
In particular, the cross-diffusion is not considered, so that the $N$ diffusion equations of $c_i$ are fully decoupled. Therefore, we only need to construct numerical algorithms for a diffusion equation
\begin{equation}\label{DF1}
\rho_t = \nabla \cdot (D(\rho, \x) \nabla \rho), \quad 
\mbox{$D(\rho, \x)$ is the diffusion coefficient} . 
\end{equation}
In fact, this diffusion equation satisfies an energy-dissipation law
\begin{equation}\label{ED_Law}
\int \rho \ln \rho +  C \rho \dd \x = - \int \mathcal{M}(\rho, \x) |\nabla \mu|^2 \dd \x, 
\end{equation}
where $\mathcal{M}(\rho, \x) = D(\rho, \x) \rho$ is known as the mobility, $C$ is an arbitrary constant, $\nabla \mu = \nabla (\ln \rho)$ turns out to be the gradient of the chemical potential $\mu = \rho \ln \rho + C \rho + 1$. With a careful spatial discretization, the discrete energy is defined as 
\begin{equation} 
  \mathcal{F}_h (\rho) := \langle \rho \ln \rho +  C \rho , {\bf 1} \rangle . 	\label{energy-phase-discrete-1} 
\end{equation} 
%where $\rho$ is a grid function.
% be the discrete free energy associated with the spatial discretization (\ref{Heat_spatial_d}), 

% For general diffusion equations (\ref{DF1}),  Lots of high-order, positive-preserving, energy-stable schemes have been developed. Here we discuss two of them, One is the exponential time differencing (ETD) method \cite{kassam2005fourth}, which can be applied to the linear diffusion with constant coefficients. Another is ... , based on convex splitting, which is suitable for a large class of nonlinear diffusions. Other numerical solvers for diffusion equations can also be integrated in the operator scheme. 

\subsubsection{An ETD scheme for a linear diffusion}
% ETD scheme (History...)
% In general, 
We first consider a linear diffusion with a constant coefficient, given by
\begin{equation}\label{L_DE}
	\rho_t = \mathcal{L} \rho, \quad \mathcal{L} = D \Delta , \, \, \, D > 0, 
\end{equation}
subject to the periodic boundary condition. Of course, the solution of linear diffusion equation~(\ref{L_DE}) satisfies the following maximum principle:
\begin{equation}
\max_{\Omega} \rho(\x , t) \leq \max_{\Omega} \rho(\x, 0), \quad \min_{\Omega} \rho(\x, t) \geq \min_{\Omega} \rho(\x, 0), \quad \forall t > 0.
\end{equation}

% where $\mathcal{L}$ is a linear elliptic operator. After discretizing the PDE in space, we get a system of ordinary differential equations 
An easy way to obtain a high-order scheme to a linear diffusion equation is to apply the exponential time differencing (ETD) method \cite{cox2002exponential, kassam2005fourth}, which is indeed exact in time. More precisely, we can introduce the spatial discretization to (\ref{L_DE}) by the standard centered difference method, which leads to
\begin{equation}\label{Heat_spatial_d}
\pp_t \rho = \mathcal{L}_h \rho . 
\end{equation}
Integrating the above equation over a single time step from $t = t_n$ to $t_{n+1}$, we get
\begin{equation}\label{ETD_Heat}
 \rho^{n+1} = e^{\mathcal{L}_h \Delta t} \rho^n,
\end{equation}
which is known as the ETD scheme~\cite{cox2002exponential}. 

% For linear diffusion, the ETD scheme \ref{Heat_spatial_d} is exact in time. %  which didn't introduce additional temporal error.It is straightforward to show that ETD scheme is energy stable
Due to the discrete maximum principle \cite{du2020maximum}, the following positivity-preserving property is obvious.
\begin{thm}  \label{Heat-positivity} 
  Given $\rho^n$, with $\rho_{i,j,k}^n > 0$, $0 \le, i,j,k \le N_0-1$, there exists a unique solution $\rho^{n+1}$ for the numerical scheme~\eqref{ETD_Heat}, with discrete period boundary condition, with $\rho_{i,j,k}^{n+1} > 0$, $0 \le i,j,k \le N_0 -1$.
\end{thm}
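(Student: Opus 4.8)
The plan is to dispose of existence and uniqueness immediately, and then extract positivity from the structure of the matrix exponential (equivalently, from the discrete maximum principle). Since $\mathcal{L}_h = D\Delta_h$ acts on the finite-dimensional space of periodic grid functions, the operator $e^{\mathcal{L}_h \dt}$ is a well-defined, invertible linear map; hence $\rho^{n+1} = e^{\mathcal{L}_h \dt}\rho^n$ is the one and only grid function satisfying \eqref{ETD_Heat}. This settles unique solvability with no further work.

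For the positivity-preserving property, I would exploit the fact that the discrete Laplacian $\Delta_h$ with periodic boundary condition is a Metzler matrix: in the standard centered-difference stencil each off-diagonal entry is either $h^{-2}$ (for a nearest-neighbor pair on the torus) or $0$, while the diagonal entry equals $-6 h^{-2}$. Consequently one can write $\mathcal{L}_h = -\alpha I + B$, with $\alpha = 6 D h^{-2} > 0$ and $B$ a matrix all of whose entries are nonnegative. Because $-\alpha I$ and $B$ commute, $e^{\mathcal{L}_h \dt} = e^{-\alpha \dt}\, e^{B \dt}$, and the power series $e^{B\dt} = \sum_{k\ge 0} (\dt)^k B^k / k!$ has only nonnegative entries; in particular $e^{B\dt}\rho^n \ge \rho^n$ entrywise whenever $\rho^n \ge 0$. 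Therefore $\rho^{n+1} = e^{-\alpha\dt}\, e^{B\dt}\rho^n \ge e^{-\alpha \dt}\rho^n > 0$ at every grid point, which is the claim. If one wants the sharper bound $\min_\Omega \rho^{n+1} \ge \min_\Omega \rho^n$, it follows from irreducibility of $B$ (the grid graph on the torus is connected), so that $e^{B\dt}$ is in fact strictly positive; alternatively, this is precisely the discrete maximum principle of \cite{du2020maximum}, which applies because the ETD update is exact in time.

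I do not anticipate a genuine obstacle here: the only point demanding a little care is the distinction between nonnegativity and strict positivity of $e^{B\dt}$, and even that is sidestepped by the factorization $e^{\mathcal{L}_h\dt} = e^{-\alpha\dt}\, e^{B\dt}$, whose leading term alone already yields a strictly positive output. The argument is robust to the space dimension (only the constant $\alpha$ changes) and to replacing the centered-difference Laplacian by any other consistent discretization with nonnegative off-diagonal entries.
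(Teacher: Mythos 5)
Your argument is correct, and it supplies the mechanism that the paper leaves implicit: the paper's entire ``proof'' is the one-line assertion that the result is obvious from the discrete maximum principle, with a citation to \cite{du2020maximum}, whereas you actually establish that maximum principle for the semigroup $e^{\mathcal{L}_h \Delta t}$ from scratch. Your route --- writing $\mathcal{L}_h = -\alpha I + B$ with $B\ge 0$ entrywise, using that the two summands commute to factor $e^{\mathcal{L}_h \Delta t} = e^{-\alpha \Delta t} e^{B\Delta t}$, and reading off $e^{B\Delta t}\ge I$ entrywise from the power series --- is the standard Metzler-matrix proof, and it buys a self-contained, dimension-independent verification where the paper only offers a reference; the unique-solvability part is equally clean, since the matrix exponential is invertible on the finite-dimensional space of periodic grid functions. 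One small inaccuracy in your closing aside: the sharper bound $\min_\Omega \rho^{n+1}\ge \min_\Omega\rho^n$ does not really come from irreducibility of $B$ (strict positivity of the entries of $e^{B\Delta t}$ by itself gives nothing of the sort); it comes from the fact that $e^{\mathcal{L}_h\Delta t}$ is entrywise nonnegative \emph{and} has unit row sums, because $\mathcal{L}_h \mathbf{1}=0$ for the periodic discrete Laplacian, so that $\rho^{n+1}_{i,j,k}$ is a convex combination of the values of $\rho^n$. Since the theorem as stated only requires strict positivity, this does not affect the validity of your proof.
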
 % 
% In addition, 
% the following discrete phase energy is introduced, for any discrete grid function $u$: 
 With the positivity-preserving and unique solvability for the numerical scheme~\eqref{ETD_Heat}, it is straightforward to prove an unconditional energy stability. 
	\begin{thm}
	\label{Heat-energy stability} 
For the numerical solution~\eqref{ETD_Heat}, we have 
	\begin{equation} 
    \mathcal{F}_h (\rho^{n+1}) \le \mathcal{F}_h (\rho^n),   \label{heat-energy-0} 
	\end{equation} 
so that $\mathcal{F}_h (\rho^n) \le \mathcal{F}_h (\rho^0_h)$, an initial constant. 
	\end{thm}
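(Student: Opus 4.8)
The plan is to exploit the fact that the ETD update $\rho^{n+1}=e^{\mathcal{L}_h\Delta t}\rho^n$ is the \emph{exact} solution at time $\Delta t$ of the spatially discrete, autonomous linear system $\partial_t\rho=\mathcal{L}_h\rho$ with initial data $\rho(0)=\rho^n$. Consequently it suffices to show that $\mathcal{F}_h$ is nonincreasing along this exact trajectory, and then integrate over $[0,\Delta t]$. First I would record that, by the discrete maximum principle underlying Theorem~\ref{Heat-positivity} (which in fact holds for $e^{t\mathcal{L}_h}$ at every $t\ge 0$, not merely at $t=\Delta t$), the trajectory remains strictly positive: $\rho(t)_{i,j,k}>0$ for all $t\in[0,\Delta t]$. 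This makes $\mathcal{F}_h(\rho(t))=\langle\rho(t)\ln\rho(t)+C\rho(t),\mathbf 1\rangle$ and its time derivative well defined on the whole interval.

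Next I would differentiate along the trajectory. Since $\frac{d}{d\rho}\bigl(\rho\ln\rho+C\rho\bigr)=\ln\rho+1+C$, the chain rule gives
\[
\frac{d}{dt}\mathcal{F}_h(\rho(t))=\bigl\langle \ln\rho+1+C,\;\partial_t\rho\bigr\rangle=D\bigl\langle \ln\rho+1+C,\;\Delta_h\rho\bigr\rangle .
\]
The constant part drops out because the discrete Laplacian $\Delta_h$ is self-adjoint with respect to $\langle\cdot,\cdot\rangle$ under periodic boundary conditions and annihilates constants, so $\langle 1+C,\Delta_h\rho\rangle=\langle \Delta_h(1+C),\rho\rangle=0$. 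A discrete summation by parts then yields
\[
\frac{d}{dt}\mathcal{F}_h(\rho(t))=D\,\langle \ln\rho,\Delta_h\rho\rangle=-D\,\langle \nabla_h\ln\rho,\nabla_h\rho\rangle .
\]

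Then I would argue that the right-hand side is nonpositive edge by edge: on each grid edge the discrete gradient of $\rho$ is a difference of the form $\rho_+-\rho_-$ and the discrete gradient of $\ln\rho$ is $\ln\rho_+-\ln\rho_-$, and since $x\mapsto\ln x$ is increasing on $(0,\infty)$ these two differences have the same sign, so their product is nonnegative. Summing over all edges gives $\langle \nabla_h\ln\rho,\nabla_h\rho\rangle\ge 0$, hence $\frac{d}{dt}\mathcal{F}_h(\rho(t))\le 0$ on $[0,\Delta t]$; integrating from $0$ to $\Delta t$ gives $\mathcal{F}_h(\rho^{n+1})\le \mathcal{F}_h(\rho^n)$, and iterating in $n$ yields $\mathcal{F}_h(\rho^n)\le \mathcal{F}_h(\rho^0_h)$. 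The only genuinely delicate points are the bookkeeping for positivity of the continuous-in-time trajectory (needed to make $\ln\rho$ meaningful throughout) and the discrete integration-by-parts identity with periodic boundary conditions; once those are in hand the monotonicity of the logarithm closes the estimate immediately. As an alternative I would note a purely discrete route: $e^{\Delta t\mathcal{L}_h}$ is a substochastic matrix with nonnegative entries and unit row sums, so convexity of $z\mapsto z\ln z$ and Jensen's inequality applied row-wise give $\mathcal{F}_h(e^{\Delta t\mathcal{L}_h}\rho^n)\le \mathcal{F}_h(\rho^n)$ directly; I would present the ODE-trajectory argument as the main proof and mention this as a remark.
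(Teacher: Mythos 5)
Your proof is correct and follows essentially the same route as the paper: the paper likewise exploits that the ETD update is exact in time for the semi-discrete system, tests $\partial_t\rho=\mathcal{L}_h\rho$ against $\ln\rho$, sums by parts, and concludes from the monotonicity of the logarithm that $\frac{d}{dt}\mathcal{F}_h(\rho)=-\langle\nabla_h\rho,\nabla_h(\ln\rho)\rangle\le 0$. You merely make explicit some bookkeeping the paper leaves implicit (positivity along the whole trajectory, the cancellation of the constant part of the chemical potential), and your Jensen/doubly-stochastic remark is a valid alternative not taken in the paper.
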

	
\begin{proof} 
Taking a discrete inner product with~\eqref{Heat_spatial_d} by $\ln \rho$ gives 
\begin{equation} 
  \langle \tfrac{\dd}{\dd t}\rho , \ln \rho \rangle = - \langle \nabla_h \rho ,  \nabla_h ( \ln \rho )  \rangle . 
  \label{heat-energy-1} 
\end{equation}   
%A direct application of variational calculation reveals that 
By a direct calculation, we have
\begin{equation} 
   \frac{d}{dt} \mathcal{F}_h ( \rho) = \langle \tfrac{\dd}{\dd t}\rho , \ln \rho \rangle  = - \langle \nabla_h \rho ,  \nabla_h ( \ln \rho )  \rangle \leq 0,  \label{heat-energy-2} 
\end{equation}     
where the last inequality is due to the monotone property of the logarithmic function. This completes the proof. 
\end{proof} 

\iffalse 
\begin{equation} 
    \langle \nabla_h u (t) ,  \nabla_h ( \ln u (t) )  \rangle  \ge 0 ,  \quad 
    \forall t \in [t^n, t^{n+1} ] . 
      \label{heat-energy-3} 
\end{equation}  
Then we arrive at 
\begin{equation} 
     \frac{d}{dt} E_h ( u(t) ) \le 0 ,  \quad \mbox{which in turn leads to} \quad 
     E_h (u^{n+1}) \le E_h (u^n) .   \label{heat-energy-4} 
\end{equation}     
\fi

In fact, such a stability is available for not only $\mathcal{F}_h (\rho)$ given by~\eqref{energy-phase-discrete-1}, but also for all the convex energies. The following estimate could be derived using similar techniques. 
	\begin{cor}
	\label{Heat-energy stability-general} 
For the numerical solution~\eqref{Heat_spatial_d}, we have $\mathcal{F}_h (\rho^{n+1}) \le \mathcal{F}_h (\rho^n)$ for any $n \ge 0$, and $\mathcal{F}_h (\rho)$ taking a form of 
	\begin{equation*} 
    \mathcal{F} (\rho) = \langle F (u) , {\bf 1} \rangle ,   \quad \mbox{in which $F$ is a convex function of $\rho$, for $\rho >0$} .  \label{energy-phase-discrete-general} 
	\end{equation*} 
	\end{cor}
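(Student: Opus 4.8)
The plan is to mimic the proof of Theorem \ref{Heat-energy stability} almost verbatim, replacing the specific logarithmic free energy with a general convex $F$. First I would note that the ETD scheme \eqref{Heat_spatial_d}--\eqref{ETD_Heat} solves the semidiscrete ODE $\tfrac{\dd}{\dd t}\rho = \mathcal{L}_h \rho$ exactly in time, so it suffices to show that $t \mapsto \mathcal{F}_h(\rho(t))$ is nonincreasing along solutions of this ODE; integrating from $t^n$ to $t^{n+1}$ then gives $\mathcal{F}_h(\rho^{n+1}) \le \mathcal{F}_h(\rho^n)$, and iterating gives $\mathcal{F}_h(\rho^n) \le \mathcal{F}_h(\rho^0_h)$.

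Next I would differentiate $\mathcal{F}_h(\rho) = \langle F(\rho), {\bf 1}\rangle$ in time by the chain rule, obtaining $\tfrac{\dd}{\dd t}\mathcal{F}_h(\rho) = \langle F'(\rho), \tfrac{\dd}{\dd t}\rho\rangle = \langle F'(\rho), \mathcal{L}_h \rho\rangle = D\langle F'(\rho), \Delta_h \rho\rangle$. Applying the discrete summation-by-parts identity under the periodic boundary condition gives $\langle F'(\rho), \Delta_h \rho\rangle = -\langle \nabla_h (F'(\rho)), \nabla_h \rho\rangle$. The key structural observation is then that, writing this sum over edges of the mesh, each term has the form $-\big(F'(\rho_P) - F'(\rho_Q)\big)(\rho_P - \rho_Q)/h^2$ for neighboring grid values $\rho_P, \rho_Q$; since $F'$ is monotone nondecreasing (as $F$ is convex), each such product is nonnegative, whence $\tfrac{\dd}{\dd t}\mathcal{F}_h(\rho) \le 0$. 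I would also invoke Theorem \ref{Heat-positivity} (discrete maximum principle) to guarantee $\rho(t)$ stays strictly positive on $[t^n, t^{n+1}]$, so that $F'(\rho)$ is well-defined throughout if $F$ is only defined for $\rho > 0$.

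The main (and really only) obstacle is bookkeeping rather than conceptual: one must make precise the discrete integration-by-parts and the edge-wise rewriting of $\langle \nabla_h(F'(\rho)), \nabla_h \rho\rangle$ in three dimensions with periodic boundaries, and check that monotonicity of $F'$ applies termwise. This is exactly the computation already carried out in \eqref{heat-energy-1}--\eqref{heat-energy-2} with $F(\rho) = \rho\ln\rho + C\rho$ (where $F'(\rho) = \ln\rho + (C+1)$ is increasing); the only change is that the monotonicity used there comes from $\ln$, whereas here it comes directly from the hypothesis that $F$ is convex. Since nothing in that argument used the specific form of $F$ beyond monotonicity of $F'$ and positivity-preservation, the corollary follows by the identical estimate.
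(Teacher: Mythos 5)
Your proposal is correct and follows essentially the same route the paper intends: the corollary is stated as a direct generalization of Theorem~\ref{Heat-energy stability}, whose proof pairs the semi-discrete equation with $F'(\rho)$, sums by parts, and uses monotonicity of $F'$ exactly as you describe, with the logarithm replaced by a general convex $F$. Your additional remark about invoking the discrete maximum principle to keep $\rho(t)>0$ on $[t^n,t^{n+1}]$ is a sensible (and correct) piece of bookkeeping that the paper leaves implicit.
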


\subsubsection{Second-order scheme for a nonlinear diffusion equation} 
The ETD scheme is not suitable for nonlinear diffusion equations. 
% such as the porous medium, PNP
The construction of a second-order accurate, positivity-preserving and energy stable scheme for a generalized nonlinear diffusion equation has always been very challenging. Here we present a general approach to achieve this goal.
%In this section, we look at the diffusion process with nonlinear diffusion coefficients $\mathcal{D} ({\bf x}, \bu)$. Again, we aim to design a numerical algorithm to preserve the positivity for the reaction concentration components, as well an energy dissipation property, in which the discrete energy functional given by~\eqref{Field-discrete energy}. 
For simplicity of presentation, it is assumed that the diffusion coefficient $D(\rho)$ depends only explicitly on $\rho$.  The case of $\x$-dependent coefficients could be handled in a similar manner. 
%  Moreover, we assume that $\mathbb{D}$ is a diagonal matrix, i.e., no cross diff
% it is assumed that the coefficients $\mathcal{D}$ is only explicitly $\bu$-dependent, i.e., $\mathcal{D} ({\bf x}, \bu) = \mathcal{D} (\bu)$.
% \subsection{An initial guess to $\bu^{n+1}$, with second order temporal accuracy} 
% Similar to~\eqref{scheme-Field-A-rough-1} in the reaction stage,

The idea is quite similar to the scheme (\ref{2nd_DVD}) in the reaction stage. First, we need a rough guess $\hat{\rho}^{n+1}$, which has to be point-wise positive, as 
% with
a second order temporal approximation to $\rho^{n+1}$. %, while preserving the positivity of the numerical solution. 
% As shown in \cite{}, a classical semi-implicit scheme 
The simplest way to obtain such a rough guess $\hat{\rho}^{n+1}$ is to use the classical semi-implicit scheme
\begin{equation} \label{semi_imp}
  \frac{\hat{\rho}^{n+1} - \rho^n}{\dt} = \nabla_h \cdot \left( \mathcal{A}_h [D (\rho^n)]  
   \nabla_h \hat{\rho}^{n+1,(2)}  \right),
\end{equation}
where $\nabla_h$ and $\nabla_h \cdot$ stand for the discrete gradient and the discrete divergence respectively, $\mathcal{A}_h [D (\rho^n)]$ is a spatially averaging operator introduced to obtain the value of $D (\rho^n)$ at staggered mesh points.  As proved in a recent work, the semi-implicit scheme (\ref{semi_imp}) satisfies the following uniquely solvable and positivity-preserving properties. 

\begin{prop}  \cite{liu2020structure} \label{Heat-positivity-2} 
  Given $\rho^n$, with $\rho_{i,j,k}^n > 0$, $0 \le i,j,k \le N_0$, there exists a unique solution $\rho^{n+1}$ for the numerical scheme~\eqref{semi_imp}, with discrete periodic boundary condition, with $\rho_{i,j,k}^{n+1} > 0, 0 \le i,j,k \le N_0$.   
\end{prop}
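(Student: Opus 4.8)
The plan is to exploit the fact that the scheme~\eqref{semi_imp} is \emph{linear} in the unknown $\hat{\rho}^{n+1}$, since the diffusion coefficient is frozen at the known level $\rho^n$. First I would recast the scheme as the linear system
\begin{equation*}
\left( I - \dt\, \nabla_h \cdot \big( \mathcal{A}_h[D(\rho^n)]\, \nabla_h \big) \right) \hat{\rho}^{n+1} = \rho^n ,
\end{equation*}
and establish unique solvability by showing the operator on the left is symmetric positive definite on the space of periodic grid functions. Taking the discrete $L^2$ inner product with $\hat{\rho}^{n+1}$ and applying summation by parts gives
\begin{equation*}
\nrm{\hat{\rho}^{n+1}}^2 + \dt\, \langle \mathcal{A}_h[D(\rho^n)]\, \nabla_h \hat{\rho}^{n+1},\, \nabla_h \hat{\rho}^{n+1} \rangle = \langle \rho^n,\, \hat{\rho}^{n+1} \rangle .
\end{equation*}
Because $\rho^n_{i,j,k} > 0$ at every grid point and $D(\cdot) > 0$ on $(0,\infty)$, the staggered-point values produced by the averaging operator $\mathcal{A}_h[D(\rho^n)]$ are strictly positive, so the left-hand quadratic form is coercive (bounded below by $\nrm{\hat{\rho}^{n+1}}^2$). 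This forces the homogeneous system to have only the trivial solution, hence the matrix is invertible and a unique $\hat{\rho}^{n+1}$ exists.

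For the positivity-preserving property I would use a discrete maximum principle argument. Let $(i_0,j_0,k_0)$ be a grid point at which $\hat{\rho}^{n+1}$ attains its global minimum $m$ over the periodic grid. At that point every neighboring difference $\hat{\rho}^{n+1}_{\text{neighbor}} - m$ is nonnegative, and the averaged coefficients $\mathcal{A}_h[D(\rho^n)]$ multiplying them are strictly positive; consequently the discrete flux term $\nabla_h \cdot \big( \mathcal{A}_h[D(\rho^n)] \nabla_h \hat{\rho}^{n+1} \big)$ evaluated at $(i_0,j_0,k_0)$ is $\ge 0$. Evaluating~\eqref{semi_imp} at that point then yields
\begin{equation*}
m = \hat{\rho}^{n+1}_{i_0,j_0,k_0} = \rho^n_{i_0,j_0,k_0} + \dt\, \big( \nabla_h \cdot ( \mathcal{A}_h[D(\rho^n)] \nabla_h \hat{\rho}^{n+1} ) \big)_{i_0,j_0,k_0} \ge \rho^n_{i_0,j_0,k_0} > 0 ,
\end{equation*}
so $\hat{\rho}^{n+1} > 0$ everywhere. (Equivalently, one may observe that $I - \dt\, \nabla_h \cdot ( \mathcal{A}_h[D(\rho^n)] \nabla_h )$ is a strictly diagonally dominant M-matrix on the connected periodic grid, hence has an entrywise positive inverse, and $\rho^n > 0$ then gives $\hat{\rho}^{n+1} > 0$.)

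The only delicate bookkeeping — and the step I expect to require the most care — is verifying the sign structure of the discrete operator on the staggered mesh: namely that $\mathcal{A}_h[D(\rho^n)]$ returns strictly positive edge weights and that $\nabla_h\cdot(\mathcal{A}_h[D(\rho^n)]\nabla_h)$ has nonnegative off-diagonal entries with vanishing row sums, which is exactly what the maximum-principle step (or the M-matrix argument) needs. Once the spatial discretization is written out explicitly this is routine, and it is the only place where the assumption $D(s) > 0$ for $s > 0$ and the inductive hypothesis $\rho^n > 0$ enter. Since this is precisely Proposition~\ref{Heat-positivity-2}, established in~\cite{liu2020structure}, I would keep the argument brief and refer to that work for the full detail.
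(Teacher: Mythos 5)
Your argument is correct, and it is worth noting that this paper does not actually prove Proposition~\ref{Heat-positivity-2} at all --- it is imported wholesale from \cite{liu2020structure}, so the only in-paper comparison available is with the techniques used for the neighboring results. Your route (unique solvability via the symmetric positive definite structure of $I - \dt\,\nabla_h\cdot(\mathcal{A}_h[D(\rho^n)]\nabla_h)$, positivity via a discrete minimum principle or, equivalently, the M-matrix property) is the natural one here and is essentially what makes this predictor step easy: because the mobility is frozen at $\rho^n$, the scheme is \emph{linear}, and the off-diagonal entries are $-\dt h^{-2}$ times strictly positive edge weights while each row sum equals $1$, so the matrix is an irreducible, strictly diagonally dominant M-matrix and $M^{-1}\rho^n \ge (\min \rho^n)\, M^{-1}\mathbf{1} = (\min\rho^n)\,\mathbf{1} > 0$. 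By contrast, the paper's own positivity proofs for the genuinely nonlinear steps (Theorems~\ref{thm1} and~\ref{Field-2nd-NLB-positivity}) cannot use linear algebra and instead recast the scheme as minimization of a strictly convex functional over an admissible set, showing via the logarithmic singularity that the minimizer cannot touch the boundary; your lighter argument buys simplicity precisely because no singular nonlinearity is treated implicitly in \eqref{semi_imp}. The one bookkeeping point you flag --- that $\mathcal{A}_h[D(\rho^n)]$ produces strictly positive staggered-mesh weights and that the resulting operator has nonnegative off-diagonal entries with vanishing row sums --- is indeed the only place where $\rho^n>0$ and $D>0$ enter, and for the minimum-principle step nonnegativity of the edge weights would already suffice. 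No gap.
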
 
%  Thus, it is adequate for our purpose. 

It is observed that, although the truncation error for~\eqref{semi_imp} is only $O (\dt)$ in the temporal discretization, a one-step computation would lead to an $O (\dt^2)$ approximation 
 %of $\hat{\rho}^{n+1}$ 
 to the PDE solution of $\rho_t = \mathcal{B} \rho$ at time step $t^{n+1}$, as long as $\rho^n$ retains a second order temporal accuracy.  
% \subsection{The second order accurate algorithm for $\bu_t = \mathcal{B} \bu$} 
Within the rough guess $\hat{\rho}^{n+1}$, we define $\hat{\rho}^{n+1/2} = \frac{1}{2} (\rho^n + \hat{\rho}^{n+1})$, which is an $O(\dt^2)$ approximation to $\rho$ at the time instant $t^{n+1/2}$. Thus, a second-order accurate scheme can be constructed through Crank-Nicolson type discretization, along with the discrete variational derivative method \cite{du1991numerical, furihata2010discrete}: 
\begin{equation}\label{scheme-Field-2nd-NLB-1}
  \begin{cases}
&\frac{\rho^{n+1} - \rho^n}{\dt} = \nabla_h (\mathcal{M}^{n+1/2}_h \nabla_h \mu^{n+1/2}) , \\
& \mu^{n+1/2} = \frac{F(\rho^{n+1}) - F(\rho^{n}) }{\rho^{n+1} - \rho^n} + \dt (\ln \rho^{n+1} - \ln \rho^{n}) , \\
& \mathcal{M}^{n+1/2}_h = \mathcal{A}_h ( D(\hat{\rho}^{n+1/2}) \hat{\rho}^{n+1/2}), \\
  \end{cases}
\end{equation}
where $F(\rho) = \rho \ln \rho + C \rho$ is the free energy density. Similar to the derivation of~\eqref{2nd_DVD}, the artificial regularization term $\dt (\ln \rho^{n+1} - \ln \rho^{n})$, which does not affect the overall accuracy, is needed in the theoretical justification of the positivity-preserving property; see the following theorem. 
%We can prove the following positivity-preserving property and unique solvability properties
\begin{thm}\label{Field-2nd-NLB-positivity} 
  Given $\rho^n$, with $\rho_{i,j,k}^n > 0$, $\forall 0 \le i, j, k \le N_0-1$, there exists a unique solution $\rho^{n+1}$ for the numerical scheme (\ref{scheme-Field-2nd-NLB-1}), with the discrete periodic boundary condition satisfying $\rho^{n+1}_{i,j,k} > 0$.
  % discrete Neumann condition, for the numerical scheme~\eqref{scheme-Field-2nd-NLB-1}-\eqref{scheme-Field-2nd-NLB-5}, with $0 <  [A]^{n+1}_{i,j,k}, [B]^{n+1}_{i,j,k}, [C]^{n+1}_{i,j,k}$, $\forall 0 \le i, j, k \le N-1$.    
\end{thm}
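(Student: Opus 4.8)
The plan is to adapt the variational/convexity strategy of Theorem~\ref{thm1} to the spatially coupled setting: recast~\eqref{scheme-Field-2nd-NLB-1} as the Euler--Lagrange equation, restricted to the mass hyperplane, of a strictly convex functional, prove that a minimizer exists on a truncated (compact) admissible set, and then use the singularity of the logarithm to exclude the boundary. First I would record the structural ingredients. Since $\hat{\rho}^{n+1/2}=\tfrac12(\rho^n+\hat{\rho}^{n+1})$ with $\rho^n>0$ and $\hat{\rho}^{n+1}>0$ (the latter from Proposition~\ref{Heat-positivity-2}), and $D(\cdot)>0$ (a standing assumption on the diffusion coefficient), the averaged mobility $\mathcal{M}^{n+1/2}_h$ is strictly positive at every staggered mesh point; hence the variable-coefficient discrete elliptic operator $L_h v:=-\nabla_h\cdot(\mathcal{M}^{n+1/2}_h\nabla_h v)$ is symmetric and positive definite on the subspace of periodic grid functions of zero mean, so its inverse $L_h^{-1}$ and the norm $\nrm{v}_{-1}^{2}:=\langle v,L_h^{-1}v\rangle$ are well defined there. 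Summing~\eqref{scheme-Field-2nd-NLB-1} over the grid and invoking periodicity gives $\langle\rho^{n+1},{\bf 1}\rangle=\langle\rho^n,{\bf 1}\rangle$, so we work within $\mathcal{V}_h:=\{\rho:\rho_{i,j,k}>0\ \forall i,j,k,\ \langle\rho,{\bf 1}\rangle=\langle\rho^n,{\bf 1}\rangle\}$.

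Next I would introduce the functional
\[ \mathcal{J}(\rho):=\frac{1}{2\dt}\nrm{\rho-\rho^n}_{-1}^{2}+\Big\langle G_{\rho^n}^{0}(\rho)+\dt\big(\rho\ln\rho-\rho-\rho\ln\rho^n\big),{\bf 1}\Big\rangle, \]
where $G_{\rho^n}^{0}$ is the convex antiderivative from~\eqref{def_G} (the $C\rho$ piece of $F$ contributes only a term linear in $\rho$, which is constant on $\mathcal{V}_h$ and may be dropped). The pointwise derivative of the bracketed integrand equals $G_{\rho^n}^{1}(\rho)+\dt(\ln\rho-\ln\rho^n)=\mu^{n+1/2}$ up to an additive constant, and the gradient of the quadratic part is $\dt^{-1}L_h^{-1}(\rho-\rho^n)$; at an interior critical point on the mass hyperplane $\nabla\mathcal{J}$ is orthogonal to that hyperplane, hence a constant grid function, and applying $L_h$ (which annihilates constants, and returns $\rho-\rho^n$ on the zero-mean preimage) recovers exactly $\dt^{-1}(\rho^{n+1}-\rho^n)=\nabla_h\cdot(\mathcal{M}^{n+1/2}_h\nabla_h\mu^{n+1/2})$; conversely any solution of the scheme is, by the same reduction, a critical point of $\mathcal{J}$ on the hyperplane. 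Strict convexity of $\mathcal{J}$ is immediate: $\nrm{\cdot}_{-1}^{2}$ is a positive definite quadratic form on zero-mean functions, $G_{\rho^n}^{0}$ is convex by Lemma~\ref{lemam1}(2), and $\rho\mapsto\dt(\rho\ln\rho-\rho)$ has second derivative $\dt/\rho>0$; this yields uniqueness.

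Then I would carry out the truncation step. On $\mathcal{V}_{h,\delta}:=\{\rho\in\mathcal{V}_h:\rho_{i,j,k}\ge\delta\}$, which is compact since the fixed positive mass forces $\delta\le\rho_{i,j,k}\le h^{-3}\langle\rho^n,{\bf 1}\rangle$, the continuous functional $\mathcal{J}$ attains a minimum $\rho^{\star}$. Suppose a node is active, say $\rho^{\star}_{i_0,j_0,k_0}=\delta$; mass conservation forces $\max_{i,j,k}\rho^{\star}_{i,j,k}\ge\langle\rho^n,{\bf 1}\rangle$, so some node $(i_1,j_1,k_1)$ has $\rho^{\star}_{i_1,j_1,k_1}$ bounded below independently of $\delta$. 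Differentiating $\mathcal{J}$ along the mass-preserving direction $e_{i_0,j_0,k_0}-e_{i_1,j_1,k_1}$: the quadratic contribution stays bounded on the fixed compact set $\mathcal{V}_{h,\delta}$, the discrete-variation terms $G_{\rho^n}^{1}(\rho^{\star}_{i_0,j_0,k_0})$ and $G_{\rho^n}^{1}(\rho^{\star}_{i_1,j_1,k_1})$ are bounded ($G_a^{1}$ extends continuously to $x=0$ with value $\ln a$, cf.~\eqref{def_G}), while the regularization contributes $\dt\ln\rho^{\star}_{i_0,j_0,k_0}=\dt\ln\delta$ plus bounded terms, so this directional derivative equals $\dt\ln\delta+O(1)$, which is strictly negative once $\delta$ is small. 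Hence $\mathcal{J}$ strictly decreases into the interior of $\mathcal{V}_{h,\delta}$, contradicting minimality; the same argument applies if several nodes are active. Therefore, for $\delta$ sufficiently small, the minimizer lies in the interior of $\mathcal{V}_{h,\delta}$, where it is a critical point of $\mathcal{J}$ on the mass hyperplane, hence — by the reduction above — a positive solution of~\eqref{scheme-Field-2nd-NLB-1}, unique by strict convexity.

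I expect the main obstacle to be the passage from the pointwise problem of Theorem~\ref{thm1} to a genuinely coupled one over the whole spatial grid: one must verify that the variable-coefficient discrete Laplacian is positive definite on the zero-mean subspace — this is precisely what the strict positivity of $\mathcal{M}^{n+1/2}_h$, hence $D>0$ together with Proposition~\ref{Heat-positivity-2}, is for — correctly handle its one-dimensional kernel through the mass constraint, and, in the boundary-exclusion step, choose a descent direction that both stays on the mass hyperplane and involves a second node whose value is bounded away from zero uniformly in $\delta$, so that only the single singular term $\dt\ln\delta$ survives. If the mobility were allowed to degenerate, neither $\nrm{\cdot}_{-1}$ nor the reduction of the Euler--Lagrange identity to the scheme would be available.
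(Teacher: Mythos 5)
Your proposal is correct and follows essentially the same route as the paper: the scheme is recast as the Euler--Lagrange equation of the strictly convex functional $\frac{1}{2\dt}\|\rho-\rho^n\|_{\mathcal{L}_{\check{\mathcal{M}}}^{-1}}^2+\langle G^0_{\rho^n}(\rho)+\dt(\,\cdot\,),{\bf 1}\rangle$ on the mass hyperplane, a minimizer is obtained on the truncated compact set $\{\delta\le\rho\le M_h\}$, and the boundary is excluded by differentiating along the mass-preserving direction $e_{i_0,j_0,k_0}-e_{i_1,j_1,k_1}$ (with $(i_1,j_1,k_1)$ the maximum node, so $\rho^\star_{i_1,j_1,k_1}\ge\beta_0$), where the bounded $\mathcal{L}^{-1}$ and $G^1$ contributions are dominated by the singular $\dt\ln\delta$ from the regularization term. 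The only cosmetic difference is that the paper quantifies the boundedness of the weighted-$H^{-1}$ term via the explicit estimate of Lemma~\ref{Field-mobility-positivity-Lem-0}, whereas you infer it from compactness and positive definiteness of the mobility operator; the argument and conclusion are the same.
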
 

%The idea of proof is quite similar to the proof in \cite{liu2020positivity}  for the first-order scheme, but requires a more tedious analysis.
To simplify the notation, we introduce an average operator: $$\overline{f} = \frac{h^3}{| \Omega|} \sum_{i,j,k=0}^{N-1} f_{i,j,k},$$  and define a hyperplane in $R^{N_0^3}$, with dimension $(N_0^3 -1)$: 
\begin{eqnarray} 
  H = \left\{ \rho_{i,j,k} = \beta_0 + \psi_{i,j,k} : {\textstyle\sum_{i,j,k=0}^{N_0-1}} \psi_{i,j,k}  = 0  \right\} .  \label{Field-def-H}
\end{eqnarray} 
% Due to the fact that its association with the $H^{-1}$ gradient flow. Meanwhile, %$\overline{[A]^{n+1}} = \overline{[A]^n} := \beta_1^{(0)}$, $\overline{[B]^{n+1}} = \overline{[B]^n} := \beta_2^{(0)}$, $\overline{[C]^{n+1}} = \overline{[C]^n} := \beta_3^{(0)}$, with $0 < \beta_1^{(0)}, \beta_2^{(0)}, \beta_3^{(0)}$, for any $n \ge 1$. 
% In turn, by setting $\psi_{i,j,k} = \rho_{i,j,k} - \beta_1^{(n)}$
% , $\psi^B_{i,j,k} = [B]_{i,j,k} - \beta_2^{(n)}$, $\psi^C_{i,j,k} = [C]_{i,j,k} - \beta_3^{(n)}$, %Before the proof of Theorem~\ref{PNP-positivity}, 
Meanwhile, we recall a preliminary estimate, which has been proved in a recent work ~\cite{chen19b}.  Let $\mathcal{C}_{\Omega}$ be the space of grid function on $\Omega$.
For any
\begin{equation}
  \varphi  \in\mathring{\mathcal C}_{\Omega} = \{ \nu \in \mathcal{C}_{\Omega} | {\bar \nu} = 0 \},
\end{equation}
% Before the statement of these two estimates, some non-constant mobility elliptic operator and its inverse, as well as the corresponding $\| \cdot \|_{-1,h}$ norm, have to be introduced. For any $\varphi  \in\mathring{\mathcal C}_{\Omega}$, 
there exists a unique $\xi \in\mathring{\mathcal C}_{\Omega}$ that solves
	\begin{equation}
\mathcal{L}_{\breve{\mathcal{M}} } (\xi)
 = \varphi, \quad \text{where} \quad \mathcal{L}_{\breve{\mathcal{M}} } (\xi):= - \nabla_h \cdot ( \check{\mathcal{M}} \nabla_h \xi).
	\label{PNP-mobility-0} 
	\end{equation}
In turn, the following discrete norm can be defined: 
	\begin{equation} 
\| \varphi  \|_{\mathcal{L}_{\breve{\mathcal{M}} }^{-1} } = \sqrt{ \langle \varphi ,  \mathcal{L}_{\check{\mathcal{M}} }^{-1} (\varphi) \rangle },  
	\end{equation} 
  which is a discrete weighted $H^{-1}$-norm associated with a non-constant mobility. %  elliptic operator.

\iffalse 
  \begin{lem}  \cite{chen19b} \label{Field-positivity-Lem-0}  % \cite{chen19b} 
    Given $\varphi^*$, $\hat{\varphi} \in H$, so that $\overline{\hat{\varphi} - \varphi^*} =0$. Also assume that $0 < \hat{\varphi}_{i,j,k} , \varphi^*_{i,j,k} \le M_h$ (dependent on $h$), for any $1 \le i,j,k \le N_0$, we have the following estimate: 
  \begin{eqnarray} 
    \|  ( - \Delta_h)^{-1} ( \hat{\varphi} - \varphi^* ) \|_\infty \le \tilde{C}_1 M_h ,  \label{PNP-Lem-0} 
  \end{eqnarray} 
  in which $\tilde{C}$ only depends on $\Omega$. 
  \end{lem} 
\fi   

    \begin{lem} \cite{chen19b} 
    \label{Field-mobility-positivity-Lem-0}  %\cite{chen19b} 
  Suppose that $\varphi_1$, $\varphi_2 \in \mathcal{C}_{\rm per}$, with $\langle \varphi_1 - \varphi_2 , 1 \rangle = 0$, i.e., $\varphi_1 - \varphi_2\in \mathring{\mathcal{C}}_{\rm per}$, and assume that $\| \varphi_1 \|_\infty , \| \varphi_2\|_\infty \le M_h$, and $\mathcal{M} \ge \mathcal{M}_0$ at a point-wise level. Then we have the following inequality:
  \begin{equation}  
   \| \mathcal{L}_{\check{\mathcal{M}} }^{-1} (\varphi_1 - \varphi_2) \|_{\infty} \leq C_2 := \tilde{C}_2 \mathcal{M}_0^{-1} h^{-1/2}.
  \end{equation}
  where $\tilde{C}_2>0$ depends only upon $\mathcal{M}_h$ and $\Omega$.
    \end{lem}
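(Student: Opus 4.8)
The plan is to establish a standard discrete elliptic regularity estimate for the mobility-weighted Laplacian $\mathcal{L}_{\check{\mathcal{M}}}$, in which the only structural input on $\check{\mathcal{M}}$ is the uniform pointwise lower bound $\check{\mathcal{M}} \ge \mathcal{M}_0$. Write $\psi := \varphi_1 - \varphi_2$, which has zero mean by hypothesis, and set $\xi := \mathcal{L}_{\check{\mathcal{M}}}^{-1}(\psi) \in \mathring{\mathcal{C}}_{\rm per}$, so that $-\nabla_h \cdot (\check{\mathcal{M}}\nabla_h \xi) = \psi$ with $\bar\xi = 0$. The target is a bound on $\|\xi\|_\infty$, and the argument proceeds in two stages: first I control $\xi$ in a discrete $H^1$ norm, then I convert this into an $\ell^\infty$ bound at the cost of a factor $h^{-1/2}$.

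For the first stage I would take the discrete inner product of the equation with $\xi$ and use summation by parts under the periodic boundary condition, giving $\langle \check{\mathcal{M}}\nabla_h\xi, \nabla_h\xi\rangle = \langle \psi, \xi\rangle$. The pointwise lower bound yields $\mathcal{M}_0\|\nabla_h\xi\|_2^2 \le \langle\psi,\xi\rangle \le \|\psi\|_2\|\xi\|_2$, and the discrete Poincar\'e inequality for mean-zero grid functions, $\|\xi\|_2 \le C_P\|\nabla_h\xi\|_2$, then gives $\|\nabla_h\xi\|_2 \le C_P\mathcal{M}_0^{-1}\|\psi\|_2$ and hence $\|\xi\|_2 \le C_P^2\mathcal{M}_0^{-1}\|\psi\|_2$. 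Finally, since $\|\varphi_1\|_\infty, \|\varphi_2\|_\infty \le M_h$, we have $\|\psi\|_2 \le |\Omega|^{1/2}\|\psi\|_\infty \le 2|\Omega|^{1/2}M_h$, so that both $\|\xi\|_2$ and $\|\nabla_h\xi\|_2$ are controlled by $C\mathcal{M}_0^{-1}M_h$ with $C$ depending only on $\Omega$.

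The second stage passes from this discrete $H^1$ control to the $\ell^\infty$ bound. Because $\check{\mathcal{M}}$ is merely bounded below and its discrete gradient is not controlled uniformly in $h$, one cannot extract a clean discrete $H^2$ estimate directly from the equation; this is precisely what distinguishes the present situation from the constant-coefficient case (the commented-out companion lemma for $(-\Delta_h)^{-1}$), where elliptic regularity furnishes an $h$-independent bound. Instead I would invoke a discrete Agmon-type inequality in three dimensions, $\|\xi\|_\infty \le C \|\xi\|_{H^1_h}^{1/2}\|\xi\|_{H^2_h}^{1/2}$, together with the inverse inequality $\|\xi\|_{H^2_h} \le C h^{-1}\|\xi\|_{H^1_h}$ valid on the uniform grid. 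Combining the two gives $\|\xi\|_\infty \le C h^{-1/2}\|\xi\|_{H^1_h}$, and substituting the first-stage bound $\|\xi\|_{H^1_h} \le C\mathcal{M}_0^{-1}M_h$ produces $\|\xi\|_\infty \le \tilde C_2\,\mathcal{M}_0^{-1}h^{-1/2}$, with $\tilde C_2$ depending only on $M_h$ and $\Omega$, as claimed.

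I expect the main obstacle to be the sharp bookkeeping of the power of $h$ in the second stage: the naive inverse inequality $\|\xi\|_\infty \le Ch^{-3/2}\|\xi\|_2$ is far too lossy, and recovering the optimal $h^{-1/2}$ requires exploiting the gradient control through the Agmon/inverse-inequality combination rather than the raw $\ell^2$ bound. A secondary subtlety is confirming that the variable, merely lower-bounded mobility genuinely obstructs the $H^2$ route, so that the $h^{-1/2}$ factor is an intrinsic consequence of the loss of elliptic regularity for nonconstant coefficients rather than an artifact of a suboptimal estimate.
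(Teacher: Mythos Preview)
The paper does not give its own proof of this lemma: it is simply quoted as a preliminary estimate from \cite{chen19b} and then invoked in the proof of Theorem~\ref{Field-2nd-NLB-positivity}. Your proposed argument is correct and is essentially the standard route by which such an estimate is obtained (and is, in outline, the argument in the cited reference): an energy estimate against $\xi$ exploiting only the lower bound $\check{\mathcal{M}}\ge\mathcal{M}_0$ yields the discrete $H^1$ control $\|\xi\|_{H^1_h}\le C\mathcal{M}_0^{-1}\|\psi\|_2\le C\mathcal{M}_0^{-1}M_h$, and then the passage to $\ell^\infty$ in three dimensions costs exactly $h^{-1/2}$, obtained either via the discrete Agmon inequality paired with the inverse estimate $\|\xi\|_{H^2_h}\le Ch^{-1}\|\xi\|_{H^1_h}$ as you wrote, or equivalently via the discrete Sobolev embedding $H^1_h\hookrightarrow \ell^6$ followed by the inverse inequality $\|\xi\|_\infty\le Ch^{-1/2}\|\xi\|_6$. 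Your remark that the variable, merely lower-bounded mobility obstructs a uniform discrete $H^2$ bound (and hence an $h$-independent $\ell^\infty$ estimate, in contrast with the constant-coefficient case) is exactly the point.
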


Now we proceed into the proof of Theorem~\ref{Field-2nd-NLB-positivity}.  % To prove the unique solvability, 
\begin{proof} 
  The mass conservative property of the numerical solution~\eqref{scheme-Field-2nd-NLB-1}
  %-\eqref{scheme-Field-2nd-NLB-5} 
  is obvious:  
  \begin{equation}
    \overline{\rho^{n+1}} = \overline{\rho^n} := \beta_0.
  \end{equation}
%  Let $\psi^n = \rho^n - \beta_0 \in \mathring{\mathcal C}_{\Omega}$ 
  A direct calculation implies that, if $\rho^{n+1}$ with $\rho^{n+1}_{i,j,k} > 0$ is the numerical solution of~\eqref{scheme-Field-2nd-NLB-1}, $\rho^{n+1}$
  % is equivalent to the 
  is a minimization of the following discrete energy functional: 
 \begin{equation}
J_n (\rho) = \frac{1}{2 \dt} \| \rho - \rho^n \|_{\mathcal{L}_{\check{\mathcal{M}}^n}} +  \langle G_{\rho^n}^0 (\rho)  + \dt ( \rho \ln \rho  + C_n \rho),  {\bf 1} \rangle,
%  - 1 + (\ln \rho^n ) \rho)
 \end{equation}
% with $\gamma_A^n = -1 - \ln a - \dt ( 1 + \ln [A]^n )$ 
over the admissible set 
\begin{eqnarray} 
  V_h^H &: = \left\{  \rho = \beta_0 + \psi ~|~ \bar{\psi} =0,    0 < \rho_{i,j,k} < M_h  , \forall ( i, j, k )  \right\}.
  % , \forall ( i, j, k ) \right\} ,  
  \label{domain-1} 
\end{eqnarray} 
Here $C_n  = C - 1 - \dt (1 + \ln \rho^n)$, $G_{\rho^n}^0 (\rho)$ is defined in (\ref{def_G}), and $M_h = \frac{\beta_0}{h^3}$.
% with $M_h = \frac{\beta^*}{h^3}$, $\beta^* = \max( \beta_1^{(0)}, \beta_2^{(0)}, \beta_3^{(0)})$.  

%Since $J_n (\rho)$ is a strictly convex function over this domain $V_h^H$, which is a bounded, compact set in the hyperplane $H$, we only need to show that $J_n (\rho)$ can not obtain its minimum on the boundary of $V_h^H$. In turn, there exists a minimizer of $J_n (\rho)$ in the interior of $V_h^H$, which is a positivity-preserving solution of the numerical scheme \eqref{scheme-Field-2nd-NLB-1}.
% Next, we prove that there exists a minimizer of $J_h ([A], [B], [C])$ over the domain $V_h^H$.
To this end, we consider the following closed domain: 
\begin{eqnarray} 
  V_{h,\delta}^H =  \left\{ \psi : \overline{\psi} = 0, \, \delta \le \rho_{i,j,k}  \le M_h   \right\}   
    \subset V_h^H .  \label{Field-2nd-NLB-positive-2} 
\end{eqnarray}   
Since $V_{h,\delta}^H$ is a bounded, compact set in the hyperplane $H$, there exists a (may not unique) minimizer of $J_h (\psi)$ over $V_{h, \delta}^H$. The key point of the positivity analysis is that, such a minimizer could not occur on the boundary points (in $H$) if $\delta$ is small enough. 

%To facilitate the analysis below, 
For a given $\rho^n$ with $\rho_{i,j,k}^n > 0$, we can assume that $\rho^n$ satisfies the following bounds 
\begin{equation}  
  \epsilon_0 \le \rho^n_{i,j,k} \le  M_h - \epsilon_0, \quad \forall 0 \le i, j, k  \le N_0 -1 . \label{bound-un} 
\end{equation} 
Assume a minimizer of $J_n (\rho)$ occurs at a boundary point of $V_{h,\delta}^H$. Without loss of generality, we set the minimization point as $\rho^*_{i,j,k}$, with $\rho^*_{i_0,j_0,k_0}=\delta$. In addition, we denote the grid point that $\rho^*$ reaches the maximum value as $(i_1, j_1, k_1)$. %which is not a neighbor point of $(i0, j0, k0)$, 
% so that $\rho^*$ reaches the maximum value at $(i1, j1, k1)$.
It is obvious that $\rho^*_{i_1, j_1, k_1} \ge \beta_{0}$, because of the fact that $\overline{\rho^*} = \beta_{0}$. %By the fact that $\overline{\phi^*} = \beta_0$, this is always possible, if $\delta$ is small enough. 
%Since the hyperplane $H$ has dimension $N^3 -1$ (for the variable $[A]$), we view the variable $\psi^A_{i1,j1,k1}$ as the $N^3$-th one, with the condition 
\iffalse
\begin{equation} 
  \rho_{i1, j1, k1} = \frac{\beta_0 |\Omega|}{h^3} - \sum_{(i,j,k) \ne (i1, j1, k1)} \rho_{i,j,k} . 
\end{equation} 
\fi
% In more details, we denote the following alternate function (with a fixed $[B]_{i,j,k} = [B]^*_{i,jk}$, $[C]_{i,j,k} = [C]^*_{i,jk}$)

To obtain a contradiction, we compute the direction derivative of $J_n(\rho)$ along the direction
\begin{equation}
\delta \psi = \delta_{i,i_0} \delta_{j, j_0} \delta_{k, k0} - \delta_{i, i_1} \delta_{j, j_1} \delta_{k,k_1} \in \mathring{\mathcal{C}}_{\rm per}, \, \,  
\mbox{$\delta_{k, l}$ is the Kronecker delta function} , 
\end{equation}
and the following identity is valid: 
\begin{equation*}
  \begin{aligned}
 & \frac{1}{h^3} \frac{J_n (\rho^* + s \delta\psi) - J_n (\rho^*)}{s} \Big|_{s = 0} = \frac{1}{\Delta t} \left( \mathcal{L}_{\check{\mathcal{M}}^n}(\rho^* - \rho^n) )_{i_0, j_0, k_0} - \mathcal{L}_{\check{\mathcal{M}}^n}(\rho^* - \rho^n) )_{i_1, j_1, k_1}  \right) \\
 & + (G_{\rho^n}^1 (\rho^*))_{i_0, j_0, k_0} - (G_{\rho^n}^1 (\rho^*))_{i_1, j_1, k_1}  +  \dt ( \ln \rho^*_{i_0, j_0, k_0} - \ln \rho^*_{i_1, j_1, k_1} ) + \langle C_n, \delta \psi \rangle . 
  \end{aligned}
\end{equation*}
In addition, by the fact that $\rho^*_{i_0, j_0, k_0}  = \delta$ and $\rho^*_{i_1, j_1, k_1} \geq \beta_0$, we get 
\begin{equation}
  \ln \rho^*_{i_0, j_0, k_0} - \ln \rho^*_{i_1, j_1, k_1} \leq \ln \delta - \ln \beta_0.
\end{equation}
In the meantime, the following inequality could be derived, based on Lemma~\ref{Field-mobility-positivity-Lem-0}:
$$
  \frac{1}{\Delta t} \left| \left( \mathcal{L}_{\check{\mathcal{M}}^n}(\rho^* - \rho^n) )_{i_0, j_0, k_0} - \mathcal{L}_{\check{\mathcal{M}}^n}(\rho^* - \rho^n) )_{i_1, j_1, k_1} \right) \right| 
   \leq 2 \tilde{C}_2 \mathcal{M}_0^{-1} h^{-1/2} \dt^{-1} . 
$$  
Since $G_{a}^1(x)$ is an increasing function in term of $x > 0$ for any fixed $a > 0$, and $\rho^n$ satisfies the bound (\ref{bound-un}), it is straightforward to obtain  
\begin{equation*}
  \begin{aligned}
  & (G_{\rho^n}^1 (\rho^*))_{i_0, j_0, k_0} - (G_{\rho^n}^1 (\rho^*))_{i_1, j_1, k_1}  + \langle C_n, \delta \psi \rangle \\ 
  & \leq \ln M_h + 1  - G_{\epsilon_0}^1 (\beta_0) + \dt (\ln M_h - \ln \epsilon_0) . 
  \end{aligned}
\end{equation*}
As a consequence, a combination of the above estimates leads to 
\begin{equation}
  \frac{1}{h^3} \frac{J_n (\rho^* + s \delta\psi) - J_n (\rho^*)}{s} \Big|_{s = 0} \leq D_0 + \dt (\ln \delta - \ln \beta^0),
\end{equation}
where $D_0= 2 \tilde{C}_2 \mathcal{M}_0^{-1} h^{-1/2} \dt^{-1}  + \ln M_h + 1  - G_{\epsilon_0}^1 (\beta_0) + \dt (\ln M_h - \ln \epsilon_0) $, a constant for fixed $\dt$ and $h$. Hence, we can choose $\delta$ to sufficiently small such that 
\begin{equation}
  \frac{1}{h^3} \frac{J_n (\rho^* + s \delta\psi) - J_n (\rho^*)}{s} \Big|_{s = 0} < 0 . 
\end{equation}
This inequality contradicts with the assumption that $\rho^*$ is a minimizer of $J_n (\rho)$. Therefore, a minimizer of $J_n (\rho)$ cannot occur on the boundary of $V_{h,\delta}^H$ if  $\delta$ is small enough. %  for any grid index $(i0, j0, k0)$. The details are left to interested readers. 
In other words, the minimizer of $J_n (\rho)$ over $V_{h}^H$ could only possibly occur at its interior point, which gives a solution of the numerical scheme \eqref{scheme-Field-2nd-NLB-1}. The uniqueness of this numerical solution comes from a direct application of the strict convexity of $J_n (\rho)$. The proof of Theorem~\ref{Field-2nd-NLB-positivity} is complete. 
\end{proof}

With the positivity-preserving property and the unique solvability established, we can further prove the following unconditional energy stability. % for the numerical scheme~\eqref{scheme-Field-2nd-NLB-1}:
\begin{thm}
	\label{Field-2nd-NLB-energy stability} 
For the numerical solution~\eqref{scheme-Field-2nd-NLB-1}, we have 
	\begin{eqnarray}  
\mathcal{F}_h (\rho^{n+1})	
   \le \mathcal{F}_h (\rho^n) ,   \quad \mbox{with} \, \, \, 
   \mathcal{F}_h (\rho^n) = \langle \rho^n \ln \rho^n + C \rho^n, {\bf 1} \rangle .    
   \label{Field-2nd-NLB-energy-0} 
	\end{eqnarray} 
%where $\mathcal{F}_h (\rho^n) = \langle \rho^n \ln \rho^n + C \rho^n, {\bf 1} \rangle$. 
	\end{thm}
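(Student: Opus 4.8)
The plan is to carry out the standard discrete energy estimate for the discrete variational derivative method, exploiting the fact that the chemical potential $\mu^{n+1/2}$ in \eqref{scheme-Field-2nd-NLB-1} is designed precisely so that testing the update equation against it reproduces the exact increment of $\mathcal{F}_h$. First I would take the discrete $L^2$ inner product of the first equation in \eqref{scheme-Field-2nd-NLB-1} with $\mu^{n+1/2}$ and apply summation by parts under the periodic boundary condition, which gives
\[
\Big\langle \frac{\rho^{n+1}-\rho^n}{\dt}, \, \mu^{n+1/2} \Big\rangle
 = - \big\langle \mathcal{M}^{n+1/2}_h \nabla_h \mu^{n+1/2}, \, \nabla_h \mu^{n+1/2} \big\rangle .
\]
By Proposition~\ref{Heat-positivity-2}, the predictor $\hat{\rho}^{n+1}$ is point-wise positive, hence so is $\hat{\rho}^{n+1/2} = \tfrac12 (\rho^n + \hat{\rho}^{n+1})$; together with $D(\cdot) > 0$ and the averaging operator $\mathcal{A}_h$ preserving positivity, this yields $\mathcal{M}^{n+1/2}_h = \mathcal{A}_h\big(D(\hat{\rho}^{n+1/2})\hat{\rho}^{n+1/2}\big) \ge 0$ at every (staggered) grid point, so the right-hand side above is $\le 0$.

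Next I would expand the left-hand side using the definition of $\mu^{n+1/2}$. At each grid point the discrete-variation term telescopes exactly, $(\rho^{n+1}-\rho^n)\cdot\frac{F(\rho^{n+1})-F(\rho^n)}{\rho^{n+1}-\rho^n} = F(\rho^{n+1})-F(\rho^n)$ (both sides vanishing when $\rho^{n+1}=\rho^n$, consistent with the convention $F'(\rho^n)$ in that case), so that
\[
\Big\langle \rho^{n+1}-\rho^n, \, \frac{F(\rho^{n+1})-F(\rho^n)}{\rho^{n+1}-\rho^n}\Big\rangle
 = \big\langle F(\rho^{n+1}) - F(\rho^n), \, {\bf 1}\big\rangle = \mathcal{F}_h(\rho^{n+1}) - \mathcal{F}_h(\rho^n),
\]
while the artificial regularization contributes $\dt\,\langle \rho^{n+1}-\rho^n, \, \ln\rho^{n+1}-\ln\rho^n\rangle \ge 0$ by the monotonicity of $\ln$ --- here I use that $\rho^{n+1} > 0$, guaranteed by Theorem~\ref{Field-2nd-NLB-positivity}, so that $\ln\rho^{n+1}$ and the difference quotient are well defined. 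Combining these,
\[
\frac{1}{\dt}\big(\mathcal{F}_h(\rho^{n+1}) - \mathcal{F}_h(\rho^n)\big) + \langle \rho^{n+1}-\rho^n, \, \ln\rho^{n+1}-\ln\rho^n\rangle = - \big\langle \mathcal{M}^{n+1/2}_h \nabla_h\mu^{n+1/2}, \, \nabla_h\mu^{n+1/2}\big\rangle \le 0,
\]
and discarding the nonnegative second term on the left produces the claimed estimate $\mathcal{F}_h(\rho^{n+1}) \le \mathcal{F}_h(\rho^n)$, which then iterates to $\mathcal{F}_h(\rho^n) \le \mathcal{F}_h(\rho^0_h)$.

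There is no serious analytical obstacle here: the computation is an exact discrete energy law, up to the sign-definite numerical dissipation and the sign-definite regularization term. The only care-points --- and the places where the earlier results enter --- are (i) the nonnegativity of $\mathcal{M}^{n+1/2}_h$, which hinges on the positivity of the predictor from Proposition~\ref{Heat-positivity-2}, and (ii) the meaningfulness of $\ln\rho^{n+1}$ and of the difference quotient defining $\mu^{n+1/2}$, which hinges on the positivity of $\rho^{n+1}$ from Theorem~\ref{Field-2nd-NLB-positivity}. It is also worth recording the mass-conservation identity $\overline{\rho^{n+1}} = \overline{\rho^n}$ established in the proof of Theorem~\ref{Field-2nd-NLB-positivity}, which makes the additive constant $C$ in $F(\rho) = \rho\ln\rho + C\rho$ immaterial to the estimate, exactly as in the linear case treated in Corollary~\ref{Heat-energy stability-general}.
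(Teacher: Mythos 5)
Your proof is correct and follows essentially the same route as the paper: testing the scheme with $\mu^{n+1/2}$, using the telescoping identity of the discrete variational derivative to extract $\mathcal{F}_h(\rho^{n+1})-\mathcal{F}_h(\rho^n)$, and discarding the nonnegative regularization term via the monotonicity of the logarithm. The additional care-points you record (positivity of the mobility and of $\rho^{n+1}$) are implicit in the paper's argument, so your write-up is simply a more explicit version of the same proof.
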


% \eqref{scheme-Field-2nd-NLB-5}, we proceed into the proof of Theorem~\ref{Field-2nd-NLB-energy stability}.  

\begin{proof} 
%First, we denote $\hat{\mu}_n^{m+1} = \mu_n^{m+1} + K_B T_0$, $\hat{\mu}_p^{m+1} = \mu_p^{m+1} + K_B T_0$. 
Taking a discrete inner products with~\eqref{scheme-Field-2nd-NLB-1} by $\mu^{n+1/2}$ yields 
\begin{eqnarray} 
\begin{aligned} 
\frac{1}{\Delta t} \langle  \rho^{n+1} - \rho^n, \mu^{n+1/2}  \rangle = -   \langle \mathcal{M}^{n+1/2}_h \nabla_h \mu^{n+1/2}, \nabla_h \mu^{n+1/2} \rangle \leq 0
\end{aligned} 
\label{Field-2nd-NLB-energy-1} 
\end{eqnarray}
Notice that
\begin{equation}
  \begin{aligned}
  \langle  \rho^{n+1} - \rho^n, \mu^{n+1/2}  \rangle  & = \mathcal{F}_h (\rho^{n+1}) -  \mathcal{F}_h (\rho^{n}) +  \dt \langle \rho^{n+1} - \rho^n, \ln \rho^{n+1} - \ln \rho^n \rangle \\
  & \geq  \mathcal{F}_h (\rho^{n+1}) -  \mathcal{F}_h (\rho^{n}) , 
  \end{aligned}
\end{equation}
due to monotonic property of the logarithmic function. Then we arrive at 
\begin{equation}
  \mathcal{F}_h (\rho^{n+1}) -  \mathcal{F}_h (\rho^{n}) \leq  -   \langle \mathcal{M}^{n+1/2}_h \nabla_h \mu^{n+1/2}, \nabla_h \mu^{n+1/2} \rangle \leq 0.
\end{equation}
\end{proof}

\begin{remark}
  It is worth emphasizing that, the discretization presented in~\eqref{scheme-Field-2nd-NLB-1} is based on the $H^{-1}$-gradient flow structure of the diffusion equations. One can also construct a variational structure preserving scheme for diffusion equations by using the Lagrangian methods \cite{carrillo2018lagrangian, junge2017fully, liu2019lagrangian}, which treat diffusion equations as an $L^2$-gradient flow in the space of diffeomorphism,  or the numerical methods for Wasserstein gradient flows in the space of probability measure \cite{benamou2016augmented}.
  % A variational structure preserving scheme for a diffusion equation with the free energy (\ref{energy-phase-discrete-1}) can be developed by using 
  % strictly speaking, the discretization to the diffusion part is not variational, which doesn't based on the original variational structure (\ref{ED_Law})
  \end{remark}

 \subsection{The second order accurate operator splitting scheme} 

The second-order operator splitting scheme could be formulated as follows, based on the previous analyses. 

 Given ${\bm c}^n$ with ${\bm c}_{i,j,k}^n \in \mathbb{R}^N_{+}$, we update ${\bm c}^{n+1}$ via the following three stages.  

 \noindent 
 {\bf Stage 1.} \, Setting ${\bm c}_0 = {\bm c}^n$ and solving the reaction trajectory equation, subject to the initial condition $R^n = 0$, using scheme (\ref{2nd_DVD}) with a temporal step-size $\dt / 2$. An intermediate numerical profile is updated as 
 \begin{equation}
 {\bm c}^{n+1, (1)} = {\bm c}^{n} + {\bm \sigma} R^{n+1, (1)}.
 \end{equation}

 \noindent 
 {\bf Stage 2.} \, Starting with the intermediate variable ${\bm c}^{n+1, (1)}$, we solve the diffusion equation $\pp_t {\bm c} = \mathcal{B} {\bm c}$ by applying either scheme (\ref{ETD_Heat}) (for constant diffusion coefficient) or scheme (\ref{scheme-Field-2nd-NLB-1}) (for nonlinear diffusion coefficient), with a temporal step-size $\dt$, to obtain ${\bm c}^{n+1, (2)}$.
 
 \noindent
 {\bf Stage 3.} \, We set ${\bm c}_0 = {\bm c}^{n+1, (2)}$ and repeat the numerical algorithm at stage 1, i.e., solving the reaction trajectory equation, subject to the initial condition $R^n = 0$, by scheme (\ref{2nd_DVD}) with the temporal step-size $\dt / 2$ to obtain  $R^{n+1, (2)}$. The numerical solution at $t^{n+1}$ is updated as 
 \begin{equation}
   {\bm c}^{n+1} = {\bm c}^{n+1, (2)} + {\bm \sigma} R^{n+1, (2)}.
   \end{equation}
The following theoretical result for the second-order operator splitting scheme can be established, based on Theorem 3.1 - 3.6. 

\begin{thm}  \label{Field-operator splitting} 
Given ${\bm c}^n$ with ${\bm c}_{i, j, k}^n \in \mathbb{R}^N_{+}, \forall 0 \le i, j, k \le N_0-1$ and a discrete period boundary condition,, there exists a unique solution ${\bm c}^{n+1}$ with ${\bm c}_{i, j, k}^{n+1} \in \mathbb{R}^N_{+}, \forall 0 \le i, j, k \le N_0-1$,  for the second order accurate operator splitting numerical scheme. In addition, we have the energy dissipation estimate: $$\mathcal{F}_h ({\bm c}^{n+1}) \le \mathcal{F}_h ({\bm c}^n) ,$$ so that $\mathcal{F}_h ({\bm c}^n) \le \mathcal{F}_h({\bm c}^0)$, a constant independent of $h$.  
 % Given $[A]^n$, $[B]^n$, $[C]^n$, with $0 <  [A]^n_{i,j,k}, [B]^n_{i,j,k}, [C]^n_{i,j,k}$, $\forall 0 \le i, j, k \le N-1$, there exists a unique solution $[A]^{n+1}$, $[B]^{n+1}$, $[C]^{n+1}$, with discrete period boundary condition, for the second order accurate operator splitting numerical scheme~\eqref{splitting-Field-A-rough-1}, \eqref{splitting-Field-2nd-A-1}-\eqref{splitting-Field-2nd-A-3}, \eqref{splitting-Field-2nd-A-4},  \eqref{splitting-Field-2nd-B-1}, (or \eqref{splitting-Field-2nd-NLB-0}-\eqref{splitting-Field-2nd-NLB-5} in the case of nonlinear diffusion coefficients) and  \eqref{splitting-Field-2nd-A-S3-1}. The point-wise positivity is ensured: $0 <  [A]^{n+1}_{i,j,k}, [B]^{n+1}_{i,j,k}, [C]^{n+1}_{i,j,k}$, $\forall 0 \le i, j, k \le N-1$. In addition, we have the energy dissipation estimate: $E_h ([A]^{n+1}, [B]^{n+1}, [C]^{n+1}) \le E_h ([A]^n, [B]^n, [C]^n)$, so that $E_h ([A]^n, [B]^n, [C]^n) \le E_h ([A]^0, [B]^0, [C]^0)$, a constant independent of $h$.  
\end{thm}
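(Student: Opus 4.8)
The plan is to treat the assembled update as a composition of three substeps and to exploit the structural fact, built into the energetic variational formulation, that all three substeps dissipate one and the same discrete free energy $\mathcal{F}_h$. Before composing, I would record the bookkeeping that reconciles the free-energy normalizations used in the separate sub-results. At a fixed grid point, the reaction-stage density $F(R) = \sum_{i=1}^N c_i(R)(\ln c_i(R) - 1) + c_i(R) U_i$, with ${\bm c}(R) = {\bm c}^0 + {\bm \sigma} R$, satisfies $h^3 \sum_{i,j,k} F(R) = \mathcal{F}_h({\bm c}(R))$; moreover, at the start of Stage~1 one has $R^n = 0$, hence ${\bm c}(R^n) = {\bm c}^n$, so $F(R^n)$ is exactly the density of $\mathcal{F}_h({\bm c}^n)$. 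For the decoupled diffusion equations, choosing the arbitrary constant in the scalar energy $\langle \rho\ln\rho + C\rho, {\bf 1}\rangle$ as $C = U_i - 1$ for the $i$-th species makes $\rho\ln\rho + C\rho = \rho(\ln\rho-1) + \rho U_i$, so that summing the $N$ scalar diffusion energies again reproduces $\mathcal{F}_h$.

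For unique solvability and positivity I would argue stage by stage. In Stage~1 the input is ${\bm c}_0 = {\bm c}^n \in \mathbb{R}^N_+$ (pointwise) with $R^n = 0$, so Theorem~\ref{thm1} applies verbatim (with step size $\dt/2$, which only rescales the $\dt$ appearing in scheme~\eqref{2nd_DVD} and leaves the proof unchanged) and yields a unique $R^{n+1,(1)}$ with ${\bm c}^{n+1,(1)} = {\bm c}^n + {\bm \sigma}R^{n+1,(1)} \in \mathbb{R}^N_+$ at every grid point. Since cross-diffusion is excluded, Stage~2 splits into $N$ scalar solves; feeding each positive component into Theorem~\ref{Heat-positivity} (constant coefficients) or Theorem~\ref{Field-2nd-NLB-positivity} (nonlinear case) gives a unique, pointwise-positive ${\bm c}^{n+1,(2)}$. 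Stage~3 is Stage~1 again with the positive input ${\bm c}^{n+1,(2)}$ and $R^n = 0$, so Theorem~\ref{thm1} closes the loop and produces ${\bm c}^{n+1} = {\bm c}^{n+1,(2)} + {\bm \sigma} R^{n+1,(2)} \in \mathbb{R}^N_+$. A composition of uniquely solvable, positivity-preserving maps is uniquely solvable and positivity-preserving, which is the first assertion.

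For energy stability I would apply Theorem~\ref{Field-energy stability-A} pointwise in Stage~1, obtaining $F(R^{n+1,(1)}) \le F(R^n)$ at each node; multiplying by $h^3$, summing over the mesh, and using the reconciliation above gives $\mathcal{F}_h({\bm c}^{n+1,(1)}) \le \mathcal{F}_h({\bm c}^n)$. In Stage~2, Theorem~\ref{Heat-energy stability} together with Corollary~\ref{Heat-energy stability-general} (constant-coefficient case), or Theorem~\ref{Field-2nd-NLB-energy stability} (nonlinear case), applied species by species with $C = U_i - 1$ and summed over $i = 1,\dots,N$, yields $\mathcal{F}_h({\bm c}^{n+1,(2)}) \le \mathcal{F}_h({\bm c}^{n+1,(1)})$. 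Stage~3 repeats the Stage~1 estimate, giving $\mathcal{F}_h({\bm c}^{n+1}) \le \mathcal{F}_h({\bm c}^{n+1,(2)})$. Chaining the three inequalities produces $\mathcal{F}_h({\bm c}^{n+1}) \le \mathcal{F}_h({\bm c}^n)$, and an induction on $n$ gives $\mathcal{F}_h({\bm c}^n) \le \mathcal{F}_h({\bm c}^0)$.

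The main obstacle is not in any individual estimate — each is already supplied by Theorems~\ref{thm1}--\ref{Field-2nd-NLB-energy stability} — but in the bookkeeping that guarantees the three stage-wise energy inequalities genuinely refer to the same functional: the reaction stage controls a pointwise density written in terms of $R$, whereas the diffusion stage controls species-wise scalar energies carrying free additive constants, and one must pin those constants down as $C_i = U_i - 1$ so that both reduce to $\mathcal{F}_h$. A minor secondary point worth noting, though not strictly required for the statement, is that $\mathcal{F}_h$ is bounded below (since $x\ln x \ge -e^{-1}$ for $x>0$), so the monotone decrease in fact furnishes a genuine uniform-in-$n$ bound.
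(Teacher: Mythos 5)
Your proposal is correct and follows exactly the route the paper intends: the paper offers no written proof of this theorem, simply asserting that it "can be established, based on Theorem 3.1 -- 3.6," and your stage-by-stage composition of the solvability, positivity, and energy-dissipation results (with the inequalities chained through the two intermediate profiles) is precisely that argument. Your explicit reconciliation of the normalization constants, identifying $C = U_i - 1$ so that the species-wise diffusion energies sum to the same $\mathcal{F}_h$ controlled at the reaction stage, is a detail the paper leaves implicit and is handled correctly.
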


\section{The numerical results}

\subsection{Reaction kinetics}
In this subsection, we test the accuracy order for the algorithm~\eqref{2nd_DVD}, by considering a simple reaction kinetics (with $\alpha >0$): 
\begin{equation}\label{test_1}
\begin{cases}
  & \dfrac{\dd c_1}{\dd t} = c_2 - \alpha c_1 , \\
  & \dfrac{\dd c_2}{\dd t} = \alpha c_1 - c_2  .   
\end{cases}
\end{equation}
In fact, this equation corresponds to a simple reversible chemical reaction $\ce{X_1 <=>[a][1] X_2}$. For any given initial value $c_i(0) =c_i^0$, the exact solution turns out to be 
\begin{equation}
c_1(t) = \left(1 + \left( \frac{c_1^0}{c_1^{\infty}} -1 \right) \exp(-(a+1)) t \right) c_1^{\infty}, \quad c_2(t) = c_1^0 + c_2^0 - c_1(t),
\end{equation}
with $c_1^{\infty} = (c_1^0 + c_2^0) / (\alpha + 1)$ being the equilibrium concentration of $X_1$. Following the earlier analysis, we introduce $R$ as the reaction trajectory, 
%then the kinematics of $c_i$ is given by 
%\begin{equation}
%c_1 = c_1^0 - R, \quad c_2 = c_2^0 + R.
%\end{equation}
so that the energy-dissipation law becomes 
\begin{equation}
\frac{\dd}{\dd t} \left( \sum_{i=1}^2 c_i (\ln c_i - 1) + c_1 \ln a + c_2 \ln (1)  \right) = - \dot{R} \ln \left(  \frac{\dot{R}}{c_2} + 1 \right).
\end{equation}
% By the variational procedure introduced in section. 2.2, we can obtain the equation for $R$ as
% \begin{equation}
%  \ln \left(  \frac{\dot{R}}{c_2} + 1 \right) = - (\ln c_1 + \ln a) + \ln c_2.
% \end{equation}

To test the numerical accuracy order, we display the errors between the numerical solution and exact solution at $T = 1$ in Table~\ref{table1}, with a sequence of step sizes $\dt$.  An almost perfect second order temporal accuracy is observed. %It can be noticed the numerical scheme can achieve second order accuracy for different $\lambda$ when $\dt$ is sufficiently small, although the numerical error are different. ({\color{red} The numerical performance of $\lambda = 0.1$ is a bit weird, I can change it other value of $\lambda$} ) 
% A clear first-order accuracy is observed. Since the proposed operator splitting scheme only has first-order temporal accuracy, a first-order numerical approximation for the reaction part is adequate for our purpose. Fig. \ref{Fig1} (b) shows the evolution of numerical free energy with respect to time ($\dt = 1/160$), which verifies the energy stability of the scheme.

\begin{table}[!h]
\begin{center}
  \begin{tabular}{c|c|c} % <-- Alignments: 1st column left, 2nd middle and 3rd right,
    \hline
   $\dt$ & Error & Order  \\
   \hline
   1/20  &  2.0882e-3  &\\ 
   1/40  &  5.3413e-4  &  1.9670 \\  
   1/80  &   1.3577e-4  & 1.9760  \\
   1/160 &  3.4279e-5  &  1.9858  \\
   1/320 &  8.6159e-6   &  1.9923  \\
   1/640 &  2.1600e-06 &  1.9960 \\
   \hline
   \hline
 \end{tabular}
\end{center}
\caption{Error table for the linear ODE system (\ref{test_1})}\label{table1}
\end{table}

\iffalse
   \begin{table}[!h]
    \begin{center}
    \begin{tabular}{c|c|c|c|c|c|c} % <-- Alignments: 1st column left, 2nd middle and 3rd right,
      \hline
      \hline
       & \multicolumn{2}{|c|}{$\lambda = 0$} &  \multicolumn{2}{|c|}{$\lambda = 0.1$} &  \multicolumn{2}{|c}{$\lambda = 1$} \\
       \hline
     $\dt$ & Error & Order & Error   & Order  &  Error & Order \\
     \hline
     1/20  & 2.5571e-04 &          &  2.3075e-05 &         &  2.0882e-3  &\\ 
     1/40  & 5.1180e-05 & 2.3209   &  7.2404e-06 &  1.6722 &  5.3413e-4  &  1.9670 \\  
     1/80  & 1.0786e-05 & 2.2464   &  3.8629e-06 & 0.9064  &  1.3577e-4  & 1.9760  \\
     1/160 & 2.4099e-06 & 2.1621   &  1.2585e-06 & 1.6180  &  3.4279e-5  &  1.9858  \\
     1/320 & 5.6404e-07 & 2.0951   & 3.5392e-07 & 1.8302   & 8.6159e-6   &  1.9923  \\
     1/640 & 1.3603e-07  & 2.0519  & 9.3578e-08 & 1.9192   & 2.1600e-06 &  1.9960 \\
     \hline
     \hline
   \end{tabular}
  \end{center}
  \caption{Error table for the linear ODE system (\ref{test_1})}\label{table1}
  \end{table}
  \fi
   %

\subsection{Reaction-diffusion systems}
In this subsection, we consider the reaction-diffusion system
\begin{equation}\label{example3}
\begin{cases}
  & \pp_t u = D_u \Delta u^{\alpha}  - k_1^+ u v^2 + k_1^- v^3 \\
  &  \pp_t v = D_v \Delta v + k_1^+ u v^2 - k_1^- v^3,
\end{cases}
\end{equation}
where $\alpha \geq 1$ is a constant,  $D_u > 0$ and $D_v > 0$ are diffusion coefficients.
% Some preliminary computational results haven been provided in~\cite{liu2020structure}.
% in which the reaction is given by 
The reaction part of (\ref{example3}) describes the chemical reaction
$$ 
  \ce{U + 2 V <=>[k_1^+][k_1^-] 3V}.
$$ 
with the law of mass action. The the whole system satisfies the energy-dissipation law 
\begin{equation*}
  \begin{aligned}
  & \frac{\dd}{\dd t} \int_\Omega u (\ln u - 1 + U_u) + v (\ln v - 1 + U_v)  \dd \x \\
  &  = -  \int_\Omega \dot{R} \ln \left( \frac{\dot{R}}{k_1^{-} v^{3}} + 1\right) + \alpha u^{\alpha} D_u |\nabla \mu_u|^2 + D_v |\nabla \mu_v|^2  \dd \x.
  \end{aligned}
\end{equation*}
The internal energies can be taken as $U_u = \ln k_1^+$ and $U_v = \ln k_1^-$ so that $\dot{R} = k_1^+ u v^2 - k_1^- v^3$.

\begin{figure}[!t]
  \begin{overpic}[width = \linewidth]{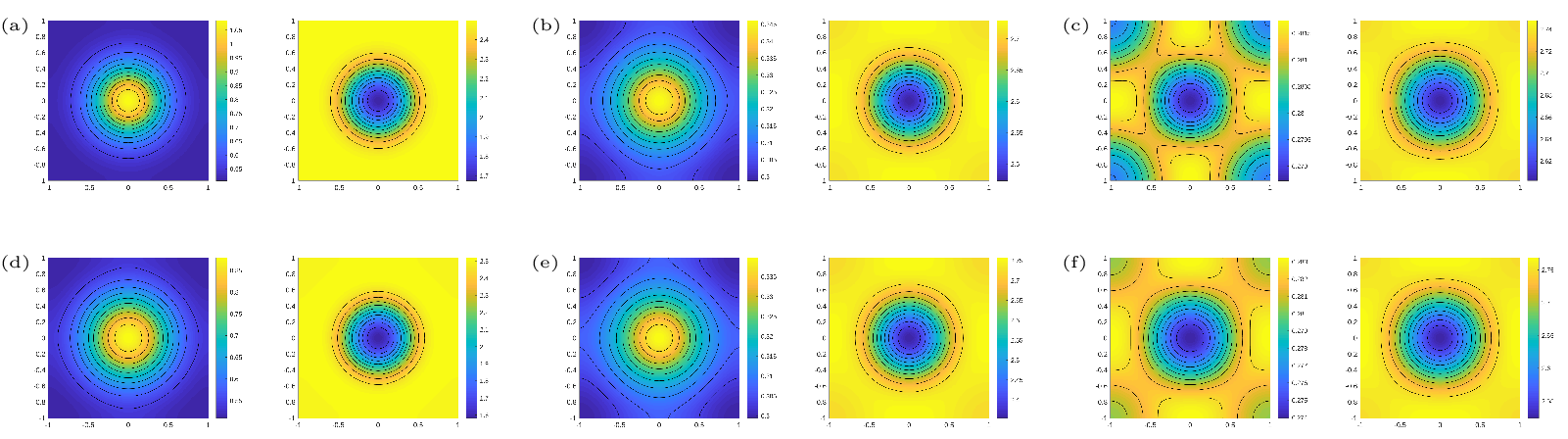}
  \end{overpic}
\caption{Numerical solutions for the reaction-diffusion system (\ref{example3}) with $\alpha = 1$ (a - c) and $\alpha =2$ (d - f) at $t = 0.2$ (a and d), $t = 0.5$ (b and e) and $t = 0.7$ (c and f).}\label{Fig1}
\end{figure}

For $\alpha = 1$, we apply the ETD scheme (\ref{ETD_Heat}) to solve the diffusion parts for both $u$ and $v$. Otherwise we use scheme (\ref{scheme-Field-2nd-NLB-1}) for $u$ and use the ETD scheme for $v$.  
The computational domain is taken as $\Omega = (-1, 1)^2$, and a periodic boundary condition is imposed for both $u$ and $v$. The initial value is set as 
\begin{equation*}
  \begin{aligned}
& u = (- \tanh( (\sqrt{x^2 + y^2} - 0.4)/0.1) + 1)/2 + 1; \\
& v = (\tanh( (\sqrt{x^2 + y^2} - 0.4)/0.1) + 1)/2 + 1. \\
  \end{aligned} 
\end{equation*}
% Figure~\ref{RD_1} displays numerical results for 
Other parameters are taken as: $D_u = 0.2$, $D_v = 0.1$, $k_1^+ = 1$ and $k_1^- = 0.1$. 

Fig. \ref{Fig1} shows the numerical solutions at $t = 0.2, 0.5$ and $0.7$ for $\alpha = 1$ and $\alpha =2$ respectively, which are obtained by taking $h = \Delta t= 1/20$. The discrete free energy evolutions corresponding to these two numerical solutions are displayed in Fig. \ref{Fig2}, %shows the discrete free energy corresponding tow numerical solutions, 
which clearly demonstrate the energy stability of the operator splitting scheme in both linear and nonlinear diffusion cases.
\begin{figure}[!h]
  \centering
  \includegraphics[width = 0.6 \linewidth]{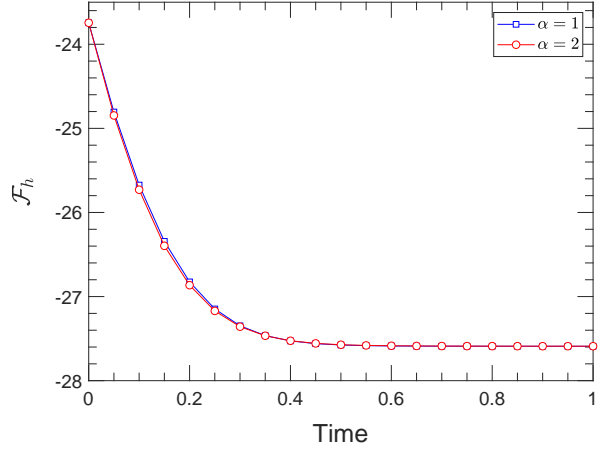}
  \caption{The discrete free energy evolutions corresponding to numerical solutions for the reaction-diffusion system (\ref{example3}) with $\alpha = 1$ and $\alpha =2$ ($h = \Delta t= 1/20$).}\label{Fig2}
  \end{figure}

Next we test for numerical accuracy of the operator splitting scheme. As analytical forms of the exact solutions are not available, we perform a Cauchy convergence test for numerical simulations for $\alpha = 1$ and $\alpha = 2$, respectively, at $T = 0.2$, before the systems reach their constant equilibria. We compute the $\ell^{\infty}$ differences between numerical solutions with consecutive spatial resolutions, $h_{j-1}$, $h_j$ and $h_{j+1}$, with $\Delta t_j = h_j$.
% in the Cauchy convergence test. %In such a Cauchy convergence test, the convergence order is calculated by the following formula: 
Since we expect the numerical scheme preserves a second order spatial accuracy, the following quantity could be computed 
$$
   \frac{ \ln \Big(  \frac{1}{A^*} \cdot 
   \frac{\| u_{h_{j-1}} - u_{h_j} \|_\infty }{ \| u_{h_j} - u_{h_{j+1}} \|_\infty} \Big) } 
   {\ln  \frac{h_{j-1}}{h_j} } ,  \quad A^* =  \frac{ 1 - \frac{h_j^2}{h_{j-1}^2} }{1 - \frac{h_{j+1}^2}{h_j^2} } ,  
   \quad \mbox{for} \, \, \, h_{j-1} > h_j > h_{j+1} , 
$$
to check the convergence order \cite{liu2020positivity}. As demonstrated in Tables~\ref{t2:convergence} and \ref{t3:convergence}, %the numerical errors at time $T=0.2$ improve robustly as the spatial mesh refines, and 
an almost perfect second order accuracy has been achieved for both the linear and nonlinear diffusion cases.

\begin{table}[ht]
  \begin{center}
  \begin{tabular}{c |c |c |c |c }
  \hline  \hline
   --- & $\psi = u$ & Order & $\psi = v$& Order  \\
   \hline
  $\| \psi_{h_1} - \psi_{h_2} \|_{\infty}$  & 4.1625e-3 & -       & 3.6818e-3  & -    \\
  $\| \psi_{h_2} - \psi_{h_3} \|_{\infty}$  & 1.5357e-3 & 1.8700  & 1.3581e-3  & 1.8705  \\ 
  $\| \psi_{h_3} - \psi_{h_4} \|_{\infty}$  & 7.3080e-4 & 1.9036  & 6.4788e-4  & 1.8950    \\
  $\| \psi_{h_3} - \psi_{h_4} \|_{\infty}$  & 4.0386e-4 & 1.9230  & 3.5830e-4 & 1.9197     \\
   \hline  \hline
  \end{tabular}
  \caption{The $\ell^\infty$ differences and convergence order for the numerical solutions of $u$, and $v$ for (\ref{example3}) with $\alpha =1$. Various mesh resolutions are used: $h_1=\frac{1}{20}$, $h_2=\frac{1}{30}$, $h_3=\frac{1}{40}$, $h_4=\frac{1}{50}$, $h_5=\frac{1}{60}$, and the time step size is taken as $\Delta t_j = h_j$.}
  \label{t2:convergence}
  \end{center}
  \end{table}

\begin{table}[ht]
  \begin{center}
  \begin{tabular}{c |c |c |c |c }
  \hline  \hline
   --- & $\psi = u$ & Order & $\psi = v$& Order  \\
   \hline
  $\| \psi_{h_1} - \psi_{h_2} \|_{\infty}$  & 4.4205e-3 & -       & 2.6961e-3  & -    \\
  $\| \psi_{h_2} - \psi_{h_3} \|_{\infty}$  &   1.4508e-3 & 2.1586 & 9.4864e-4  & 1.9870   \\ 
  $\| \psi_{h_3} - \psi_{h_4} \|_{\infty}$  & 6.1387e-4 & 2.3120  & 4.3720e-4  & 2.0150   \\
  $\| \psi_{h_3} - \psi_{h_4} \|_{\infty}$  & 3.1575e-4 & 2.2446  & 2.4420e-4 & 1.8752   \\
   \hline  \hline
  \end{tabular}
  \caption{The $\ell^\infty$ differences and convergence order for the numerical solutions of $u$, and $v$ for (\ref{example3}) with $\alpha =2$ at $T=0.2$. Various mesh resolutions are used: $h_1=\frac{1}{20}$, $h_2=\frac{1}{30}$, $h_3=\frac{1}{40}$, $h_4=\frac{1}{50}$, $h_5=\frac{1}{60}$, and the time step size is taken as $\Delta t_j = h_j$.}
  \label{t3:convergence}
  \end{center}
  \end{table}

\iffalse
\subsection{Reversible Brusselator model}
In this subsection, we consider a reversible Brusselator model with the chemical reactions
\begin{equation}
\ce{A <=>[k_1^+][k_1^-] X}; \quad \ce{B + X <=>[k_2^+][k_2^-] Y + C}, \quad \ce{2X + Y <=>[k_3^+][k_3^-] 3 X}
\end{equation}
The detailed balance condition requires
\begin{equation}
k_1^+ a = k_1^- x, \quad  k_2^+ b x = k_2^- y c, \quad k_3^+ x^2 y = k_3^- x^3,
\end{equation}
which indicates
\begin{equation}\label{eq_bc}
k_2^+ k_3^+ b = k_2^- k_3^- c.
\end{equation}
We denote the concentration of $A$, $B$, $C$, $X$ and $Y$ by $c_{i}, ~ i = 1, \ldots 5$, then the energy-dissipation law can be formulated as
\begin{equation}
\frac{\dd}{\dd t} \int \sum_{i=1}^5 c_i (\ln c_i - 1) + c_i U_i  \dd \x = - \int \sum_{i=1}^N c_i |\uvec_i|^2 + \sum_{l=1}^3 \dot{R}_l \ln \left( \frac{R_l}{\eta_l ({\bm c})}  + 1 \right)  \dd \x,
\end{equation}
where $U_i$ is the internal energy associated with each species, and $\eta_l({\bm c})$ is the mobility associated with each reaction.
To obtain the law of mass action with the given reaction rate, we take
$U_1 = - \ln k_1^-$, $U_2 = - \ln (k_2^- k_3^-)$, $U_3 = - \ln (k_2^+ k_3^+)$, $U_4 = - \ln k_1^+ $, $U_5 = - \ln (k_1^+ k_3^-) + \ln k_3^+$, $\eta_1({\bm c}) = k_1^- c_4$, $\eta_2 ({\bm c}) = k_2^- c_5 c_3$, and $\eta_3 ({\bm c}) = k_3^- c_4$. 
The  Brusselator model is interesting due to ...... 
If one can maintain the concentration of $B$ and $C$ such that (\ref{eq_bc}) violates, the system is driven away from the equilibrium.
\fi

% \subsection{Nonlinear diffusion }
\section{Concluding remarks}
A second-order accurate, operator splitting numerical scheme is developed for reaction-diffusion equations with the  detailed balance condition based on their variational structures. The key idea is to design an operator splitting scheme such that each stage dissipates the same free energy, according to the variational structure associated with the original system. In the reaction part, the reaction trajectory equation is solved by using the numerical techniques from $L^2-$gradient flows, based on a modified Crank-Nicolson approach. In the diffusion part, an ETD algorithm gives an exact time integration for a linear diffusion process, while a semi-implicit algorithm is applied for a nonlinear diffusion. A combination of the numerical algorithms at both stages by the Strang splitting approach leads to the proposed operator splitting scheme. Moreover, the unique solvability, positivity-preserving property, as well as an unconditionally energy stability can be proved for each stage; as a result, the combined splitting scheme also satisfies these theoretical properties. Similar ideas can be applied to other dissipative systems with multiple dissipation mechanisms. A few numerical results have also been presented to demonstrate the numerical performance. 
% in which the operator splitting strategy is 
% The key idea is to solve the reaction part 

\section*{Acknowledgement} 
This work is partially supported by the National Science Foundation (USA) grants NSF DMS-1759536, NSF DMS-1950868 (C. Liu, Y. Wang), and NSF DMS-2012669 (C. Wang).  Y. Wang would also like to thank Department of Applied Mathematics at Illinois Institute of Technology for their generous support and for a stimulating environment.

% \section*{References}
% \bibliographystyle{plainnat}
\bibliographystyle{siam}
\bibliography{KCR}

\end{document}